\newcommand{\Kbb}{\ensuremath{\mathbb{K}}}
\newcommand{\freeAlgebra}[2][]{\ensuremath{#1\left\langle#2\right\rangle}}
\newcommand{\LM}[1]{\ensuremath{\fP[\mathrm{LM}]{#1}}}
\newcommand{\LC}[1]{\ensuremath{\fP[\mathrm{LC}]{#1}}}
\begin{document}
	\makeatletter
\begin{center}
    \huge \@title \\\vspace*{.5em}
    \Large \@author \\\vspace*{.5em}
    Univ. Limoges, CNRS, XLIM, UMR 7252, F-87000 LIMOGES, France  \\
    \large email: \texttt{adya.musson-leymarie@protonmail.com}
\end{center}
\makeatother

	\begin{abstract}
		Anick introduced a resolution, that now bears his name, of a field using an augmented algebra over that field. We present here what one could call a dictionary between Anick's original paper \cite{Anick-86} and the other resources on the matter, most of which use the language of non-commutative Gröbner bases.
	\end{abstract}

	\section*{Introduction}
\addcontentsline{toc}{section}{Introduction}

When the task at hand is to effectively compute some homology groups of an associative algebra, one is presented with a choice: which resolution to use. The bar resolution~\cite{MacLane-95} is a resolution that always exists but is usually far too large for practical purposes. One prefers resolutions that are as close as possible to the minimal one. However, there are no definite algorithm known to compute the minimal resolution in general. Anick resolution~\cite{Anick-86} offers an alternative (for a certain class of algebras) that involves usable amount of data to be constructed and used to complete our goals of computation. It is not in general minimal but it is still smaller than the bar resolution and is computed quite easily algorithmatically.

Anick resolution acts upon an augmented algebra $A$ with a given presentation whose generators extend in a free monoid of monomials that needs be equipped with a monomial order. The resolution can be taken in the category of right $A$-modules as well as left $A$-modules. It consists of free modules whose bases are formed from \emph{$n$-chains}~\cite{Anick-85, Anick-86}, a construction that acts purely combinatorially on another one called \emph{obstructions} (or \emph{tips}). These two are the fundamental concepts surrounding Anick resolution. An algorithmic method to compute the so-called obstructions is through the use of non-commutative Gröbner bases, as done by Ufnarovski in~\cite{Ufnarovski-95}. This is why the Anick resolution is nowadays mostly known through the lense of non-commutative Gröbner bases.

Our purpose in this paper is to give proofs and explanations from known facts in the folklore surrounding the topic of Anick resolution, that had yet to be written explicitly. In particular, we wish to emphasise the bridge existing between Anick's original paper~\cite{Anick-86} and the subsequent resources expressed in the language of non-commutative Gröbner bases. Following Anick's paper structure, we will introduce our setting while showing how it translates with his work. In the first section, the two main results are Proposition~\ref{proposition:M-oir} and Proposition~\ref{proposition:V_M-basis} that state, respectively:
\begin{itemize}
    \item the \emph{normal words} are exactly the words that have no leading monomials of the relations as subwords,
    \item the \emph{obstructions} are the leading monomials of the minimal non-commutative Gröbner bases of the ideal of relations.
\end{itemize}
The second section introduces the notion of $n$-chains from two different perspectives: one due to Anick and the other one due to Ufnarovski in terms of a graph. We show in Proposition~\ref{proposition:chains-definitions-match} that those two are equivalent. The third section presents the resolution in itself and gives the proof written by Anick, as a matter of completeness, with somewhat more details to help the reader. Throughout this paper, we will follow an example to show how the different constructions concretely shape out to be.

\section*{Notations and conventions}
\addcontentsline{toc}{section}{Notations and conventions}

\begin{itemize}
    \item We will denote by $\freeAlgebra{X}$ (where $X$ is a non-empty set) the \emph{free monoid} on $X$ consisting of the words on the alphabet $X$. We will write $1$ for the empty word. Alternative notations often found in the litterature are $X^\ast$ (\emph{Kleene star}) for the free monoid and $\epsilon$ for the empty word.
    \item Writing $\Kbb S$ where $\Kbb$ is a field and $S$ is a set will denote the $\Kbb$-vector space with basis $S$, thus consisting of the finite formal linear combinations of elements of $S$ with coefficients in $\Kbb$.
    \item We identify $\freeAlgebra[\Kbb]{X}$, where $\Kbb$ is a field and $X$ is a non-empty set of indeterminates, with the \emph{free algebra} consisting of all the polynomials with non-commutative indeterminates in $X$ and coefficients in $\Kbb$. It is the $\Kbb$-algebra on the free monoid $\freeAlgebra{X}$ (see for instance \cite{Brevsar-14} for the construction of the free algebra). Alternative notations in the litterature include $\Kbb X^\ast$ or $T(V)$, where $T(V)$ denotes the \emph{tensor algebra} constructed from a vector space $V$ whose basis is in bijection with $X$. The tensor algebra so-defined and $\freeAlgebra[\Kbb]{X}$ are isomorphic as $\Kbb$-algebras.
    \item The notation $I(R)$, where $R$ is a subset of $\freeAlgebra[\Kbb]{X}$, means the two-sided ideal generated by $R$ in the ring $\freeAlgebra[\Kbb]{X}$, \ie the set of finite combinations of elements from $R$ with left and right coefficients in $\freeAlgebra[\Kbb]{X}$.
    \item Let $A$ be a $\Kbb$-algebra. A \emph{presentation} $\freeAlgebra{X|R}$ of $A$ is given by a set $X$ of indeterminates called \emph{generators} and a subset $R$ of $\freeAlgebra[\Kbb]{X}$ called \emph{relations} such that $A$ is isomorphic to the quotient algebra $\nicefrac{\freeAlgebra[\Kbb]{X}}{I(R)}$. It is easy to see that any algebra is of this form (refer to \cite{Brevsar-14} for details). Given a presentation $\freeAlgebra{X|R}$ of $A$, we will write $\ol{g}$, where $g$ is a polynomial or a word in $\freeAlgebra[\Kbb]{X}$, for the image of $g$ in $A$ by the natural projection $\pi$ induced by the presentation.
    \item Let us fix throughout this paper $\prec$, a monomial order on $\freeAlgebra{X}$, \ie a well-order on $\freeAlgebra{X}$ compatible with left- and right-multiplication of words.
    \item If $f$ is a nonzero polynomial in $\freeAlgebra[\Kbb]{X}$, then the highest monomial for $\prec$ in the support of $f$ (\ie the set of monomials in $f$ appearing with a nonzero coefficient) will be written $\LM{f}$. We will write similarly $\LM{F} := \ens{\LM{f}}{f \in F}$ for any set $F$ of nonzero polynomials in $\freeAlgebra[\Kbb]{X}$.
    \item For a set of generators $X$, we will call \emph{monomial ideal} any monoidal ideal in $\freeAlgebra{X}$, \ie any subset $I$ of $\freeAlgebra{X}$ such that for every word $w$ in $I$ and every word $u$ and $v$ in $\freeAlgebra{X}$, the word $uwv$ is also in $I$.
    \item We remind the reader that, given an ideal $I$ in $\freeAlgebra[\Kbb]{X}$, a \emph{non-commutative Gröbner basis} of $I$ according to the monomial order $\prec$ is any subset $G$ of $I$ such that the leading monomials $\LM{G}$ generates $\LM{I}$ as a monomial ideal, \ie every leading monomial in $I$ has a leading monomial in $G$ as a subword. We will call a non-commutative Gröbner basis $G$ \emph{minimal} if no leading monomial in $\LM{G}$ is divisible by another leading monomial in $\LM{G}$. If moreover no monomial in the support of any element of $G$ is divisible by a leading monomial in $\LM{G}$, it is said to be \emph{reduced}. Note that all the minimal non-commutative Gröbner bases of a same ideal $I$ share the same set of leading monomials.
\end{itemize}

	\section{Setting}

Let us fix a field $\Kbb$ throughout this paper. This corresponds to the field $k$ in \cite{Anick-86}.

Suppose we have an associative unitary $\Kbb$-algebra denoted $A$ (instead of $G$ in \cite{Anick-86}) that we assume is augmented by the augmentation map $\fHV[\eps]{A}{\Kbb}$; it is a surjective homomorphism of $\Kbb$-algebras. We will denote $\fHV[\eta]{\Kbb}{A}$ the section of $\eps$ defined by $\eta(1_\Kbb) = 1_A$, as a $\Kbb$-linear map.

Throughout this paper, we will assume that $A$ is defined by a fixed presentation $\freeAlgebra{X|R}$ in the sense that $A$ is actually equal to the quotient algebra $\nicefrac{\freeAlgebra[\Kbb]{X}}{I(R)}$. In \cite{Anick-86}, Anick uses presentations implicitly: he picks out a set $X$ (he denotes $S$) of generators for $A$, and considers the canonical surjective morphism of algebras $\fHV[f]{\freeAlgebra[\Kbb]{X}}{A}$ that ensues, that subsequently, by the First Isomorphism Theorem, gives rise to an isomorphism $\nicefrac{\freeAlgebra[\Kbb]{X}}{\ker(f)} \cong A$. In our setting, this surjective morphism $f$ is exactly the natural projection $\pi$ from the free algebra to the quotient algebra. It follows that the kernel $\ker(f)$ in \cite{Anick-86} is nothing other than the two-sided ideal $I(R)$ generated by $R$ in our notations.

It is worth noting that most subsequent sources make the assumption that $\eps$ is zero on the set $X$ of generators. This indeed is the case when we take the very common augmentation of a presented algebra as the evaluation of polynomials at $0$, assuming our relations do not contain any constant terms. It is also the case when we consider the common case of a connected graded algebra augmented by its natural augmentation, as done in \cite{Ufnarovski-95}\footnote{Where connectivity of graded algebras is always assumed at page 24.}. However, with very little efforts, mostly during the initialisation of the proof of exactness, the general case, without any assumptions on the augmentation map $\eps$, remains true as shown in \cite{Anick-86}.

In \cite{Anick-86}, Anick uses a specific kind of monomial order that is graded by a certain function he denotes~$e$. In particular, if we follow his suggestion of setting $e(x) = 1$ for all $x \in X$, we obtain the order commonly known as \emph{deglex} (lexicographic order graded by degree).

In \cite[Lemma 1.1]{Anick-86}, Anick introduces a set $M$, defined as the set of words $w$ whose images in $A$ cannot be expressed as a finite linear combination of images in $A$ of words that are smaller than $w$ by $\prec$. These words are called \emph{normal words} in the litterature. This vocabulary comes from the algebraic rewriting area where a word is called a \emph{normal form} according to a set of rewriting rules when no more of the rules can be applied to it. We shall later see (Corollary~\ref{corollary:R-is-grobner-basis}) that the set $M$ is exactly the set of normal form monomials according to $R$ (\ie the rewriting rules induced by the relations) if, and only if, $R$ is a non-commutative Gröbner basis of $I(R)$ according to $\prec$. Therefore, by existence and uniqueness of a reduced non-commutative Gröbner basis of $I(R)$, it makes sense to talk about normal words solely based on the ideal rather than the generating set of rewriting rules.
What Anick calls an \emph{admissible monomial} in \cite{Anick-86} is thus in our language a \emph{normal word}.

In our setting, we will define and write $M$ in the same manner, explicitly:
\[
    M := \ens{w \in \freeAlgebra{X}}{\forall (\DOTS{w_1}{w_n}) \in \freeAlgebra{X}^n \vir w_i \prec w \ALORS \forall (\DOTS{\lambda_1}{\lambda_n}) \in \Kbb^n \vir \ol{w} \neq \sum_{i = 1}^{n} \lambda_i \ol{w_i}}.
\]

It is well known in the litterature on non-commutative Gröbner bases~\cite{Ufnarovski-95} that the set $M$ is the complement of $\LM{I(R)}$ in $\freeAlgebra{X}$, usually denoted $O(I(R))$. We prove this fact in the next proposition.
\begin{proposition}{}{M-oir}
    With the same previous notations, we have:
    \[
        M = \freeAlgebra{X} \setminus \LM{I(R)} =: O(I(R)).
    \]

    As a consequence, we have the very well-known direct sum decomposition of $\Kbb$-vector spaces:
    \[
        \freeAlgebra[\Kbb]{X} = I(R) \oplus \Kbb M.
    \]
\end{proposition}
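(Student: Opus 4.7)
The plan is to deduce the equality $M = O(I(R))$ directly from definitions by verifying each inclusion separately, and then to derive the direct sum decomposition as a consequence.

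For the direction $w \notin M \implies w \in \LM{I(R)}$, if $w \notin M$ then by unwinding the definition there exist words $w_1, \ldots, w_n \prec w$ and scalars $\lambda_1, \ldots, \lambda_n \in \Kbb$ such that $\ol{w} = \sum_i \lambda_i \ol{w_i}$. Then $f := w - \sum_i \lambda_i w_i$ lies in $\ker(\pi) = I(R)$; it is nonzero because the coefficient of $w$ in it is $1$, and every other monomial in its support is strictly smaller than $w$ for $\prec$, so $\LM{f} = w$ and thus $w \in \LM{I(R)}$. For the converse, given $w \in \LM{I(R)}$, pick a nonzero $f \in I(R)$ with $\LM{f} = w$ and write $f = \LC{f} \cdot w + \sum_i \lambda_i w_i$ with every $w_i \prec w$. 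Since $\ol{f} = 0$ and $\LC{f} \neq 0$, one can solve for $\ol{w}$ as a $\Kbb$-linear combination of the $\ol{w_i}$, which shows $w \notin M$.

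For the direct sum decomposition, I would first check the trivial intersection $I(R) \cap \Kbb M = \{0\}$: any nonzero $h \in \Kbb M$ has $\LM{h} \in M$ by construction, whereas any nonzero $g \in I(R)$ has $\LM{g} \in \LM{I(R)}$, and these two sets of monomials are disjoint by the first part of the proposition. For the sum $I(R) + \Kbb M = \freeAlgebra[\Kbb]{X}$, the strategy is a transfinite induction on $\LM{f}$, which is legitimate because $\prec$ is a well-order. If $\LM{f} \in M$, one subtracts the leading term $\LC{f} \cdot \LM{f}$, which already belongs to $\Kbb M$, and applies the induction hypothesis to the remainder whose leading monomial is strictly smaller. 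If $\LM{f} \in \LM{I(R)}$, one picks $p \in I(R)$ with $\LM{p} = \LM{f}$ and subtracts the scalar multiple $(\LC{f}/\LC{p}) \cdot p$, which is in $I(R)$ and again lowers the leading monomial of the remainder.

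The only delicate point is ensuring the reduction procedure in the second case terminates, which is exactly what the well-ordering assumption on $\prec$ guarantees; the rest is purely formal bookkeeping on leading monomials.
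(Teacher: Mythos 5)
Your proof of the equality $M = \freeAlgebra{X} \setminus \LM{I(R)}$ is correct and follows essentially the same route as the paper: both directions pass between a relation $\ol{w} = \sum_i \lambda_i \ol{w_i}$ with $w_i \prec w$ and a nonzero polynomial $w - \sum_i \lambda_i w_i \in I(R)$ whose leading monomial is $w$. The only difference is that you also supply a proof of the decomposition $\freeAlgebra[\Kbb]{X} = I(R) \oplus \Kbb M$, which the paper merely records as a well-known consequence; your argument (disjointness of $M$ and $\LM{I(R)}$ for the trivial intersection, and well-founded induction on the leading monomial for the sum, legitimate since $\prec$ is a well-order) is correct and completes that step.
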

\begin{proof}
    Let $w \in M$. Suppose there exists a nonzero polynomial $g \in I(R)$ such that $w = \LM{g}$. Therefore, we can write $g = \lambda_w w + \sum_{w' \prec w} \lambda_{w'} w'$ with $\lambda_w \neq 0$. Applying the natural projection $\pi$, on one hand, we get $\ol{g} = 0$ because $g \in I(R)$ and on the other hand, $\ol{g} = \lambda_w \ol{w} + \sum_{w' \prec w} \lambda_{w'} \ol{w'}$ because $\pi$ is linear. By rearranging, we have exhibited that:
    \[
        \ol{w} = -\frac{1}{\lambda_w} \sum_{w' \prec w} \lambda_{w'} \ol{w'}.
    \]
    Therefore, $w \notin M$ which is a contradiction, so $M \subseteq O(I(R))$.

    Conversely, if $w \notin M$, \ie we can write $\ol{w} = \sum_{w' \prec w} \lambda_{w'} \ol{w'}$, then consider the polynomial $g = w - \sum_{w' \prec w} \lambda_{w'} w'$; it is non-zero and is trivially sent to zero under $\pi$, therefore we exhibited $g \in I(R)$ such that $\LM{g} = w$, which means $w \in \LM{I(R)}$, \ie $\freeAlgebra{X} \setminus M \subseteq \LM{I(R)}$ from which we deduce $O(I(R)) \subseteq M$ and the result follows.
\end{proof}

\begin{corollary}{}{R-is-grobner-basis}
    The set $R$ of relations is a non-commutative Gröbner basis of $I(R)$ if and only if $M$ is the set of normal form monomials according to $R$.
\end{corollary}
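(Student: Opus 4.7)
The plan is to chain together three dictionaries: the description of $M$ provided by Proposition~\ref{proposition:M-oir}, the combinatorial description of normal-form monomials with respect to $R$, and the very definition of a non-commutative Gröbner basis.

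First I would fix notation for the proof and write $\langle\LM{R}\rangle$ for the monomial ideal of $\freeAlgebra{X}$ generated by $\LM{R}$, \ie the set of words admitting some $\LM{r}$ with $r\in R$ as a subword. The rewriting rule induced by a relation $r \in R$ reads $\LM{r} \mapsto -\frac{1}{\LC{r}}(r-\LC{r}\LM{r})$, so a monomial $w$ is a normal form with respect to $R$ exactly when no such rule applies to any subword of $w$, \ie when $w \notin \langle\LM{R}\rangle$. Thus the set of normal-form monomials according to $R$ is $\freeAlgebra{X} \setminus \langle\LM{R}\rangle$.

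Next I would invoke Proposition~\ref{proposition:M-oir} to rewrite $M = \freeAlgebra{X} \setminus \LM{I(R)}$. Taking complements, the statement $M$ equals the set of normal-form monomials according to $R$ is equivalent to $\LM{I(R)} = \langle\LM{R}\rangle$. The inclusion $\langle\LM{R}\rangle \subseteq \LM{I(R)}$ is automatic, since $R \subseteq I(R)$ implies $\LM{R} \subseteq \LM{I(R)}$, and $\LM{I(R)}$ is stable under left and right multiplication by arbitrary words because $\prec$ is compatible with multiplication. Hence the equivalence reduces to the reverse inclusion $\LM{I(R)} \subseteq \langle\LM{R}\rangle$, which is verbatim the definition of $R$ being a non-commutative Gröbner basis of $I(R)$ according to $\prec$ as recalled in the Notations and conventions.

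I do not expect a genuine obstacle here; the proof is a translation exercise between three equivalent ways of saying the same thing. The only point that deserves explicit mention is the implicit use of the compatibility of $\prec$ with multiplication to see that $\LM{I(R)}$ is itself a monomial ideal, which legitimises comparing it with the monomial ideal generated by $\LM{R}$.
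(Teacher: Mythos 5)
Your proof is correct and follows essentially the same route as the paper: identify the normal-form monomials with the complement of the monomial ideal generated by $\LM{R}$, then combine Proposition~\ref{proposition:M-oir} with the definition of a non-commutative Gröbner basis. The paper states this in two sentences; you merely make explicit the automatic inclusion $\langle\LM{R}\rangle \subseteq \LM{I(R)}$, which is a fine (if not strictly necessary) addition.
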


\begin{proof}
    Normal form monomials according to $R$ are exactly the monomials that are not in the monomial ideal generated by $\LM{R}$. Therefore, the corollary is a consequence of Proposition~\ref{proposition:M-oir}, by the definition of non-commutative Gröbner bases given previously.
\end{proof}

Since $A$ is isomorphic to $\nicefrac{\freeAlgebra[\Kbb]{X}}{I(R)}$, it follows, from the decomposition in Proposition~\ref{proposition:M-oir}, that $A$ is isomorphic to $\Kbb M$ as $\Kbb$-vector spaces, \ie the family $\ol{M} := \famille{\ol{m}}{m \in M}$ is a $\Kbb$-basis of $A$. In particular, the cardinality of the set of normal words gives the dimension of $A$. Some authors (see for instance \cite{Ufnarovski-95}, page 28) write $N$ for the space spanned by normal words $\Kbb M$ and call it the \emph{normal complement} of the ideal $I(R)$: it is isomorphic to $A$ as $\Kbb$-vector spaces and allows therefore, by identification, to perform most of our computations inside the free algebra.

The set $M$ has a special structure that Anick calls \emph{order ideal of monomials} (or "o.i.m." for short). That structure is defined as a subset $W$ of words in $\freeAlgebra{X}$ such that every subword of a word in $W$ is also in $W$. He proceeds by mentionning that giving an o.i.m. is equivalent to giving an anti-chain with respect to the subword partial order (\ie a set of words that are pairwise not subwords of one another). We prove that result here in the more general context of any poset with a well-founded relation (o.i.m.'s and anti-chains are defined for any poset). If $\hat{E} := (E, \leq)$ is a partially ordered set, consider the following sets:
\begin{equation*}
    I_{\hat{E}} := \ens{F \subseteq E}{\forall x \in F, \forall y \in E, y \leq x \ALORS y \in F}, \label{eq:oim}
\end{equation*}
\begin{equation*}
    J_{\hat{E}} := \ens{F \subseteq E}{\forall x \in F, \forall y \in F, x \not< y \ET y \not< x}. \label{eq:anti-chain}
\end{equation*}

Notice that the set $I_{\hat{E}}$ is the set of o.i.m.'s of $\hat{E}$ and the set $J_{\hat{E}}$ is the set of anti-chains of $\hat{E}$.

Then, define the following map:
\begin{equation*}
    \fHV[f_{\hat{E}}]{J_{\hat{E}}}{I_{\hat{E}}}
    [F][f_{\hat{E}}(F) := \ens{y \in E}{\forall x \in F \vir (x \leq y \OU y \leq x) \ALORS y < x}.] \label{eq:fhatE}
\end{equation*}

This translates as saying that an element $y$ is in the image of an anti-chain $F$ if and only if, granted $y$ is comparable with an element from $F$, then it is necessarily smaller. This means that $f_{\hat{E}}(F)$ is exactly the union of the set of the elements incomparable with any element from $F$ and of the set of the elements that are smaller than an element from $F$.

The map is well-defined because if $F$ is an anti-chain, then, for any $y \in f_{\hat{E}}(F)$ and $x \leq y$:
\begin{itemize}
    \item if $y$ is comparable to an $x' \in F$, then $y < x'$. By transitivity, $x < x'$ and thus $x \in f_{\hat{E}}(F)$.
    \item if $y$ is incomparable with every element of $F$, then $\forall x' \in F, x' \not\leq x$. Otherwise there would exist $x' \in F$ such that $x' \leq x \leq y$, a contradiction. Therefore, if $x$ is comparable with a $x' \in F$, it is necessarily such that $x < x'$. This means exactly that $x \in f_{\hat{E}}(F)$.
\end{itemize}
Hence, $f_{\hat{E}}(F)$ is an o.i.m.

\begin{proposition}{}{oim-antichain}
    Let $\hat{E} = (E, \leq)$ be a partially ordered set. If $\leq$ is a well-founded relation on $E$, then $f_{\hat{E}}$ is a bijection and its inverse is given by:
    \begin{equation}
        \fHV[g_{\hat{E}}]{I_{\hat{E}}}{J_{\hat{E}}}
        [F'][g_{\hat{E}}(F') := \ens{y \in E \setminus F'}{\forall x \in E \setminus F' \vir x \not< y}.] \label{eq:fhatEinverse}
    \end{equation}
\end{proposition}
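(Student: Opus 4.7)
The plan is to verify two things: (i) that $g_{\hat{E}}$ is well-defined, i.e., sends each o.i.m.~to an anti-chain, and (ii) the two composition identities $f_{\hat{E}} \circ g_{\hat{E}} = \mathrm{id}_{I_{\hat{E}}}$ and $g_{\hat{E}} \circ f_{\hat{E}} = \mathrm{id}_{J_{\hat{E}}}$. Part (i) is essentially free: the set of $\leq$-minimal elements of any subset of $E$ is trivially an anti-chain, so no use of well-foundedness is required at this stage.

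For $f_{\hat{E}} \circ g_{\hat{E}} = \mathrm{id}_{I_{\hat{E}}}$, I would fix $F' \in I_{\hat{E}}$ and set $F := g_{\hat{E}}(F')$, then prove both inclusions. The inclusion $F' \subseteq f_{\hat{E}}(F)$ unwinds directly from the definitions: if $y \in F'$ is comparable to some $x \in F \subseteq E \setminus F'$, then $x \leq y$ would force $x \in F'$ by downward closure of $F'$, so $y < x$ and $y \in f_{\hat{E}}(F)$. The reverse inclusion $f_{\hat{E}}(F) \subseteq F'$ is the only step that genuinely uses well-foundedness, and I would argue contrapositively: if $y \notin F'$, the set $\{z \in E \setminus F' : z \leq y\}$ is non-empty, hence contains a $\leq$-minimal element $x$, which I verify to be minimal in the whole of $E \setminus F'$ (any purported witness $z' < x$ with $z' \in E \setminus F'$ would itself satisfy $z' \leq y$ and thus contradict minimality in the subset). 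Hence $x \in F$ with $x \leq y$, but then the defining clause of $f_{\hat{E}}(F)$ forces $y < x$, contradicting $x \leq y$.

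For $g_{\hat{E}} \circ f_{\hat{E}} = \mathrm{id}_{J_{\hat{E}}}$, I would fix an anti-chain $F$ and set $F' := f_{\hat{E}}(F)$. First, $F \cap F' = \emptyset$ since no $x \in F$ can satisfy $x < x$. Then each $x \in F$ is minimal in $E \setminus F'$: a strict predecessor $z < x$ with $z \notin F'$ must, by $z \notin f_{\hat{E}}(F)$, be comparable with some $x' \in F$ with $x' \leq z$, giving $x' \leq z < x$ and contradicting the anti-chain property of $F$. Conversely, any $z$ minimal in $E \setminus F'$ admits some $x' \in F$ with $x' \leq z$, and the minimality of $z$ inside $E \setminus F'$ (together with $F \subseteq E \setminus F'$) forces $x' = z$, so $z \in F$.

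The main obstacle is the inclusion $f_{\hat{E}}(F) \subseteq F'$ above: it is the sole place where well-foundedness genuinely enters, and one must be careful to promote minimality of $x$ inside the truncated set $\{z \in E \setminus F' : z \leq y\}$ to minimality in $E \setminus F'$ itself. Everything else reduces to routine definitional juggling combining the o.i.m.~(downward-closure) condition with the anti-chain condition.
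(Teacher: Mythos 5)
Your proposal is correct and follows essentially the same route as the paper: both prove that $g_{\hat{E}}$ is a two-sided inverse of $f_{\hat{E}}$ by element-wise double inclusions, combining downward-closure of the o.i.m.\ with the anti-chain condition in exactly the same way. The only (welcome) difference is that you make the role of well-foundedness fully explicit in the inclusion $f_{\hat{E}}(g_{\hat{E}}(F')) \subseteq F'$, extracting a minimal element of $\ens{z \in E \setminus F'}{z \leq y}$ and promoting it to a minimal element of $E \setminus F'$, whereas the paper handles this step more tersely via the case split $F' = E$ versus $F' \neq E$ and the assertion that every element of $E \setminus F'$ is comparable to some minimal one.
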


Notice that the map $g_{\hat{E}}$ sends an o.i.m. $F$ to the set of minimal elements in $E \setminus F$, which is an anti-chain by construction; hence $g_{\hat{E}}$ is well-defined. Since $\leq$ is a well-founded relation, note that $g_{\hat{E}}(F)$ is non-empty if and only if $F \neq E$.

\begin{proof}
    Let us show both assertions of the proposition at once by proving that $g_{\hat{E}}$ is indeed a two-sided inverse for $f_{\hat{E}}$.

    Consider $F' \in I_{\hat{E}}$ an o.i.m. Define $F$ as $g_{\hat{E}}(F')$, \ie as the set of minimal elements of $E \setminus F'$.
    
    If $F' = E$, then $F = \emptyset$. In that case, $f_{\hat{E}}(F)$ is equal to $E$ since there are no elements in $F$ to compare the elements of $E$ with.
    
    Consider thus $F' \neq E$. Hence, $F$ is non-empty. It follows that see that $f_{\hat{E}}(F) = F'$. Indeed:
    \begin{itemize}
        \item Suppose $y \in f_{\hat{E}}(F)$. On one hand, if $y$ is not comparable with any elements of $F$, then $y$ cannot possibly be in $E \setminus F'$ since there exists elements in $F$ and they are minimal in $E \setminus F'$; $y$ would therefore be comparable with one of them. On the other hand, if it is comparable to an $x \in F$, then $y < x$ but $y$ cannot be in $E \setminus F'$ since $x$ is minimal. Therefore, $f_{\hat{E}}(F) \subseteq F'$.
        \item Suppose now $y \in F'$. Assume $y$ is comparable with some $x \in F$ \ie $x \leq y \lor y \leq x$. If $x \leq y$ then it would follow that $x \in F'$ since $F'$ is an o.i.m. Therefore, since $x \notin F'$, we must necessarily have $y < x$ and thus $y \in f_{\hat{E}}(F)$. Hence, $F' \subseteq f_{\hat{E}}(F)$.
    \end{itemize}

    Hence, $f_{\hat{E}} \circ g_{\hat{E}} = \id_{I_{\hat{E}}}$.

    Consider now $F \in J_{\hat{E}}$ an anti-chain. Define $F'$ as $f_{\hat{E}}(F)$. Note that $F \subseteq E \setminus F'$.

    \begin{itemize}
        \item Suppose $y \in g_{\hat{E}}(F')$. In particular, $y \notin F'$. Then $y$ is comparable with an $x \in F$ such that necessarily $x \leq y$. But, on one hand, $x \in F$ implies $x \notin F'$, on the other hand, $y$ is minimal in $E \setminus F'$, therefore $x = y$ and thus $y \in F$.
        \item Suppose $y \in F$. Then $y \notin F'$. Suppose we would have $x \in E \setminus F'$ such that $x < y$. Then $x$ would be comparable with a $z \in F$ such that $z \leq x$ and thus we would have $z < y$. However, $F$ is anti-chain and $z, y \in F$, so that's a contradiction. We conclude that there are no elements $x \in E\setminus F'$ such that $x < y$, which exactly means $y \in g_{\hat{E}}(F')$.
    \end{itemize}
    Hence, we have $g_{\hat{E}} \circ f_{\hat{E}} = \id_{J_{\hat{E}}}$.
\end{proof}

\begin{definition}{}{V_M-anti-chain}
    Denote by $\hat{X}$ the free monoid $\freeAlgebra{X}$ equipped with the well-founded relation of subwords. Define $V_M$ as $g_{\hat{X}}(M)$, the unique anti-chain in $\hat{X}$ associated to the o.i.m.\ $M$ of normal words where $g_{\hat{X}}$ is the map defined in (\ref{eq:fhatEinverse}). The elements of $V_M$ are called the \emph{obstructions} (or \emph{tips}) of the o.i.m.\ $M$.
\end{definition}

It is well known in the litterature~\cite{Ufnarovski-95} that $V_M$ is the minimal generating set of $\LM{I(R)}$ as a monomial ideal. We prove this fact in the next proposition. It shows furthermore the connection between Anick's original setting and the language of non-commutative Gröbner bases.
\begin{proposition}{}{V_M-basis}
    The set $V_M$ is the unique minimal set generating $\LM{I(R)}$ as a monomial ideal. In particular, for any minimal non-commutative Gröbner basis $G$ of $I(R)$, we have:
    \[
        V_M = \LM{G}.
    \]
\end{proposition}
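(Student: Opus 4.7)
The strategy relies on the following reformulation: by Proposition~\ref{proposition:M-oir}, the o.i.m.\ $M$ is the complement of $\LM{I(R)}$ in $\freeAlgebra{X}$, and, unrolling the formula for $g_{\hat{X}}$ from Proposition~\ref{proposition:oim-antichain}, the set $V_M$ is exactly the set of subword-minimal elements of $\LM{I(R)}$. The proposition then splits into three compact verifications: that $V_M$ generates $\LM{I(R)}$ as a monomial ideal, that any minimal generating set of $\LM{I(R)}$ coincides with $V_M$, and that for every minimal non-commutative Gröbner basis $G$ of $I(R)$ the set $\LM{G}$ falls under this uniqueness.

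For generation, I would pick any $w \in \LM{I(R)}$ and descend along the subword order: if $w$ is not already minimal in $\LM{I(R)}$, replace it by a strictly shorter subword of $w$ still lying in $\LM{I(R)}$, and iterate. Since word length is a non-negative integer, the process terminates in finitely many steps at an element of $V_M$ that is a subword of $w$, which is exactly the statement that $V_M$ generates $\LM{I(R)}$ as a monomial ideal. Minimality of $V_M$ then comes for free because $V_M$ is, by construction, an anti-chain for the subword order.

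Uniqueness follows from a standard double-inclusion argument. Given any minimal generating set $S$ of $\LM{I(R)}$, each $w \in V_M$ admits a subword $w' \in S \subseteq \LM{I(R)}$, and the subword-minimality of $w$ in $\LM{I(R)}$ forces $w' = w$; conversely, each $w \in S$ admits a subword $w' \in V_M \subseteq S$ by the inclusion just proved, and the minimality of $S$ then forces $w' = w$. For the last assertion, a minimal non-commutative Gröbner basis $G$ of $I(R)$ is by definition a subset of $I(R)$ whose leading monomials form a minimal generating set of $\LM{I(R)}$ as a monomial ideal, so the identity $\LM{G} = V_M$ is an immediate application of the uniqueness just obtained. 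I do not foresee any serious obstacle; the only delicate point is the well-foundedness argument underpinning the descent, which is harmless here since the length function sends the subword order into the natural numbers.
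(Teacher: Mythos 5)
Your proof is correct and follows essentially the same route as the paper: identify $V_M$, via Proposition~\ref{proposition:M-oir} and the definition of $g_{\hat{X}}$, with the set of subword-minimal elements of $\LM{I(R)}$, deduce generation (the paper leaves your length-descent implicit) and minimality from the anti-chain property. Your explicit double-inclusion argument for uniqueness and for $\LM{G}=V_M$ merely spells out what the paper delegates to its earlier remark that all minimal non-commutative Gröbner bases of an ideal share the same leading monomials.
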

\begin{proof}
    Recall by Definition~\ref{definition:V_M-anti-chain}, $V_M$ is the minimal elements of the complement of $M$. But, by Proposition~\ref{proposition:M-oir}, we have $M = \freeAlgebra{X} \setminus \LM{I(R)}$. Therefore, $V_M$ is exactly the set of minimal elements (in terms of the subword relation) of $\LM{I(R)}$, which is exactly equivalent to saying that $V_M$ generates $\LM{I(R)}$ as a monomial ideal. Moreover, $V_M$ being an anti-chain, removing any element from $V_M$ implies losing the ability to generate $\LM{I(R)}$, hence $V_M$ is minimal as a generating set.
\end{proof}

In particular, if $R$ is indeed a minimal non-commutative Gröbner basis as it usually is, then we have $V_M = \LM{R}$. In general, we can use the set of obstructions as a one-to-one index set for the reduced non-commutative Gröbner basis of $I(R)$. It can be also useful in certain contexts to consider the associated monomial algebra presented by $\freeAlgebra{X | V_M}$, for instance to compute more easily the Hilbert series (see~\cite{Cojocaru-97, Draxler-99, Ufnarovski-95}).

The Proposition~\ref{proposition:V_M-basis} is equivalent to, but expressed in a different way than, the Lemma 1.2 from~\cite{Anick-86} stating that every non-normal word contains an obstruction.

	\section{$n$-chains and critical branchings}

The idea of Anick resolution is to construct free $A$-modules with as close to the minimal amount of generators as possible that still allow us to define differentials in a way that gives rise to a resolution with an explicit contracting homotopy. We will consider here the case of right modules (refer to \cite{Nguyen-19} for an adaptation to left modules).

In order to do so, Anick introduces the notions of $n$-prechains and $n$-chains through a top-down definition.
\begin{definition}{Chains (top-down)}{chains-top-down}
    Let $w = x_1 \cdots x_\ell$ be a word in $\freeAlgebra{X}$. Let $n \in \NnonNul$.

    We say that $w$ is a \emph{$n$-prechain} if there exists two $n$-tuples $(\DOTS{a_1}{a_n})$ et $(\DOTS{b_1}{b_n})$ of integers such that:
    \[
        1 = a_1 < a_2 \leq b_1 < a_3 \leq b_2 < a_4 \leq b_3 < \cdots < a_n \leq b_{n-1} < b_n = \ell
    \]
    and
    \[
        \forall i \in \integers{1}{n} \vir x_{a_i} x_{a_i + 1} \cdots x_{b_i - 1} x_{b_i} \in V_M.
    \]

    A $n$-prechain is called a \emph{$n$-chain} if:
    \[
        \forall m \in \integers{1}{n} \vir \forall i \in \integers{1}{b_m - 1} \vir x_1 x_2 \cdots x_i \text{ is not a $m$-prechain.}
    \]
\end{definition}

Intuitively, a $n$-prechain is a sequence of $n$ obstructions where two obstructions in a row overlap each other by at least a character while obstructions separated by at least one obstruction in the sequence do not overlap. A $n$-chain is a $n$-prechain such that the consecutive overlaps are "maximal" in the sense that no other overlap with the same obstructions could have been longer while still satisfying the condition that the obstructions one apart do not overlap. All of the obstructions need not appear in each prechain and chain and the same obstruction can appear several times within a single prechain or chain. 

Notice that the set of $1$-chains according to this definition is exactly the set of obstructions $V_M$.

\begin{example}{}{chains}
    On the alphabet $X = \ensemble{x, y, z}$ with the anti-chain $V_M = \ensemble{xxx, xxyx, yxz}$, we have  $\underline{x\ol{xx}}\ol{x}$ is a $2$-chain, $\underline{xx\ol{x}}\ol{xx}$ is a $2$-prechain but not $2$-chain nor a $3$-prechain. Similarly, $\underline{x\ol{xx}}\ol{yx}$ is a $2$-chain but $\underline{xx\ol{x}}\ol{xyx}$ is a $2$-prechain but not a $2$-chain, since $\underline{x\ol{xx}}\ol{x}$ is a $2$-prechain contained in it and shorter but with the same number of obstructions. It is also not a $3$-prechain because the first obstruction $xxx$ would overlap with the third obstruction $xxyx$. A $3$-chain is for instance $\underline{xxy\ol{x}}\ol{x}\underline{\ol{yx}z}$.
\end{example}

By convention, let the set of $(-1)$-chains be exactly $\ensemble{1}$ and the set of $0$-chains be exactly $X$.

Anick establishes a result in \cite{Anick-86} in the form of Lemma 1.3 stating that for an $n$-chain, the $n$-tuples $(\DOTS{a_1}{a_n})$ and $(\DOTS{b_1}{b_n})$ are uniquely determined. In particular, this means that:
\begin{proposition}{}{split-chains}
    For any $n \in \NnonNul$, any $n$-chain $w = x_1 \cdots x_\ell$ (defined with $(\DOTS{a_1}{a_n})$ and $(\DOTS{b_1}{b_n})$) can be uniquely expressed as $w = vu$, where $v = x_1 \cdots x_{b_{n-1}}$ is an $(n-1)$-chain and $u = x_{b_{n-1} + 1} \cdots x_\ell$ is a normal word.
\end{proposition}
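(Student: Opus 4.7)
I will verify the three assertions in turn: that $v = x_1 \cdots x_{b_{n-1}}$ is an $(n-1)$-chain, that $u = x_{b_{n-1}+1} \cdots x_\ell$ is a normal word, and that the factorisation $w = vu$ of this shape is unique. For the first point, I would simply truncate $w$'s data: the pairs $(a_1, \ldots, a_{n-1})$ and $(b_1, \ldots, b_{n-1})$ form a sub-sequence of $w$'s prechain inequalities, and each $x_{a_i} \cdots x_{b_i}$ already lies in $V_M$, so $v$ is an $(n-1)$-prechain on these tuples. The chain condition ``no strictly shorter prefix is an $m$-prechain'' for $m \leq n-1$ is exactly the restriction of $w$'s chain condition to those values of $m$, so it transfers to $v$ for free.

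For the second point, I would argue by contradiction. Suppose some obstruction $o = x_c \cdots x_d \in V_M$ is a subword of $u$, so $b_{n-1}+1 \leq c \leq d \leq \ell$. Since $a_n \leq b_{n-1} < c$ and $d \leq \ell = b_n$, the positions of $o$ sit strictly inside those of the $n$-th obstruction $x_{a_n} \cdots x_{b_n}$ of $w$; hence $o$ is a proper subword of another element of $V_M$, which contradicts $V_M$ being an anti-chain (Definition~\ref{definition:V_M-anti-chain}). So no obstruction is a subword of $u$, i.e., $u$ is normal.

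For uniqueness, suppose $w = v'u'$ with $v' = x_1 \cdots x_m$ an $(n-1)$-chain prefix of $w$ — whose own tuples $(a_i'), (b_i')$ satisfy $b_{n-1}' = m$ by the uniqueness of tuples cited just above the proposition — and $u'$ a normal word. The aim is $m = b_{n-1}$. If $m < b_{n-1}$, applying $w$'s chain condition with index $n-1$ at $i = m$ gives that $x_1 \cdots x_m$ is not an $(n-1)$-prechain, contradicting $v'$ being an $(n-1)$-chain. If $m > b_{n-1}$, applying $v'$'s chain condition with index $n-1$ at $i = b_{n-1}$ says $v = x_1 \cdots x_{b_{n-1}}$ is not an $(n-1)$-prechain, contradicting the first step. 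Hence $m = b_{n-1}$ and the decomposition is unique.

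The main obstacle is the bookkeeping in the uniqueness step, which hinges on both chain conditions (for $w$ and for $v'$) being simultaneously available; this is exactly why the $(n-1)$-chain status of $v$ has to be settled first. The rest is driven by the observation that the tail $u$ is pinned down by the $n$-th obstruction of $w$, which leaves no room for another element of $V_M$ to sit entirely inside $u$.
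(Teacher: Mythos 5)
Your proof is correct, but note that it takes a different route from the paper: the paper gives no standalone proof of this proposition at all, presenting it instead as an immediate consequence of Anick's Lemma~1.3 (the uniqueness of the tuples $(a_1,\dots,a_n)$ and $(b_1,\dots,b_n)$ attached to an $n$-chain), which it cites from \cite{Anick-86} without reproving. You argue directly from Definition~\ref{definition:chains-top-down}: truncating the tuples gives the $(n-1)$-prechain structure on $v$, and the chain condition for $v$ is indeed the restriction of that of $w$ to $m\le n-1$; normality of $u$ follows because $u$ sits inside positions $[b_{n-1}+1,\ell]\subseteq[a_n+1,b_n]$, so any obstruction occurring in $u$ would be a strictly shorter subword of the obstruction $x_{a_n}\cdots x_{b_n}$, contradicting the anti-chain property of $V_M$ (together with the identification of normal words with obstruction-free words, Proposition~\ref{proposition:V_M-basis}); and uniqueness is obtained by playing the chain condition of $w$ against that of a hypothetical $(n-1)$-chain prefix $x_1\cdots x_m$ with $m\ne b_{n-1}$, which in fact proves the slightly stronger statement that $v$ is the \emph{only} $(n-1)$-chain prefix of $w$, irrespective of the normality of the complementary suffix. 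What your approach buys is a self-contained argument that does not rely on the unproved uniqueness-of-tuples lemma; what the paper's citation buys is brevity. Two small remarks: the equality $b'_{n-1}=m$ is forced by the definition of an $(n-1)$-prechain (its last index must equal the length of the word), not by the uniqueness of tuples; and the case $n=1$ has to be read with the convention that $0$-chains are the single letters (no chain condition is available there, but the decomposition $v=x_1$, $u=x_2\cdots x_\ell$ is then immediate and clearly unique).
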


\begin{example}{}{}
    Following the examples given in Example~\ref{example:chains}:
    \begin{itemize}
        \item for the $2$-chain $w = xxxyx$, $v = xxx$, $u = yx$.
        \item for the $2$-chain $w = xxxx$, $v = xxx$, $u = x$.
        \item for the $3$-chain $w = xxyxxyxz$, $v = xxyxxyx$, $u = z$.
    \end{itemize}
\end{example}

The top-down Definition~\ref{definition:chains-top-down} is not particularly easy to grasp, as such, conceptually and even less algorithmically. We will prefer the bottom-up definition given in most other sources and present it here.

First, let us warn that we will be using the numbering proposed in \cite{Nguyen-19} rather the one proposed in \cite{Anick-86}: what Anick calls $0$-chains in the top-down Definition~\ref{definition:chains-top-down} will be $1$-chains for us, $1$-chains will be $2$-chains, and so on. That way, the numbering will match the homology degrees conveniently.

\begin{definition}{Chains (bottom-up) \emph{due to Ufnarovski} \cite{Ufnarovski-95}}{chains-bottom-up}
    With previous notations and remarks, construct a simple directed graph $Q$ whose nodes are:
    \[
        Q_0 = \ensemble{1} \cup X \cup \ens{s \in \freeAlgebra{X}}{s \text{ is a proper suffix of an obstruction}}.
    \]

    The directed edges are defined as follows:
    \begin{align*}
        Q_1 = \ens{(1, x)}{x \in X} \cup \ens{(s, t) \in (Q_0 \setminus \ensemble{1})^2}{st \text{ contains only one obstruction and it is a suffix}}.
    \end{align*}

    For any non-negative integer $n \in \N$, we define the set of $n$-chains as:
    \[
        C_n := \ens{\prod_{i = 0}^{n} w_i}{(1 = w_0, w_1, \cdots, w_n) \text{ are nodes in a path of length $n$ in $Q$ starting at $1$}}.
    \]
\end{definition}

In other words, an $n$-chain is the product of nodes travelling through a path of length $n$ starting at the node $1$. Note that the nodes that are not in the connected component of the node $1$ have no use for our purpose and can therefore be omitted. Note also that we have $C_0 = \ensemble{1}$, $C_1 = X$, and $C_2 = V_M$.

This definition can be rephrased with ease in terms of a recursive definition of $n$-chains with tails as done in~\cite{Malbos-19}.

\begin{proposition}{Top-down and bottom-up definitions match}{chains-definitions-match}
    Let us denote by $\hat{C}_n$ the set of $n$-chains defined in Definition~\ref{definition:chains-top-down} for $n \geq -1$. We have:
    \[
        \forall n \in \N \vir \hat{C}_{n-1} = C_n.
    \]
\end{proposition}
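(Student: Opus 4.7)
The plan is to proceed by induction on $n \in \N$, proving $\hat{C}_{n-1} = C_n$. The base cases $n \in \{0, 1\}$ are tautological from the conventions. For $n = 2$, I would verify $C_2 = V_M$: a length-$2$ path $(1, x, s)$ has $s$ a proper suffix of some obstruction, and the edge $(x, s)$ forces $xs$ to contain exactly one obstruction as a suffix; since $V_M$ is an anti-chain, this obstruction cannot be a proper suffix of $xs$, so $xs \in V_M$, and conversely every obstruction is reached by such a path.

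For the inductive step, I would actually carry a \emph{strengthened} statement: for $n \geq 2$, the bijection between $(n-1)$-chains $v$ and length-$n$ paths in $Q$ matches each $v$, described by indices $a_1, b_1, \ldots, a_{n-1}, b_{n-1}$, with the unique path $(1, w_1, \ldots, w_n)$ whose terminal node is the trailing block $w_n = x_{b_{n-2}+1} \cdots x_{b_{n-1}}$ (using the convention $b_0 = 1$ when $n = 2$). The overlap inequality $a_{n-1} \leq b_{n-2}$ guarantees $w_n$ is a proper suffix of $o_{n-1}$, hence a valid node of $Q$.

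For $\hat{C}_n \subseteq C_{n+1}$, I would take $w \in \hat{C}_n$ and split it via Proposition~\ref{proposition:split-chains} as $w = v u$ with $v \in \hat{C}_{n-1}$ and $u = x_{b_{n-1}+1} \cdots x_{b_n}$ a proper suffix of $o_n$. The strengthened induction hypothesis supplies a length-$n$ path with product $v$ terminating at $w_n$, and I append $u$. Legality then reduces to checking that $w_n u = x_{b_{n-2}+1} \cdots x_{b_n}$ contains $o_n$ as its unique obstruction-suffix: the strict inequality $b_{n-2} < a_n$ places $o_n$ as a suffix of this window, while any other obstruction present would either start before position $b_{n-2}+1$ (incompatible with the chain-of-overlaps bookkeeping for $v$) or sit as a proper subword of $o_n$ (ruled out by the anti-chain property of $V_M$). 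For the reverse inclusion, I would start from a length-$(n+1)$ path $(1, w_1, \ldots, w_{n+1})$, extract an $(n-1)$-chain $v$ from its truncation via the induction hypothesis, read $(a_n, b_n)$ off the unique suffix-obstruction in $w_n w_{n+1}$, and verify the prechain inequalities and the maximality clause from Definition~\ref{definition:chains-top-down}.

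The chief obstacle is establishing the \emph{uniqueness-of-obstruction} condition in both directions: in $\subseteq$, that no spurious obstruction infects the window $w_n u$, and in $\supseteq$, that the freshly-read indices $(a_n, b_n)$ fulfil the maximality requirement defining an $n$-chain rather than merely an $n$-prechain. Both arguments hinge on the same combinatorial engine --- the strict inequality $b_{n-2} < a_n$ isolating the last window from earlier obstructions, paired with the anti-chain nature of $V_M$ forbidding nested obstructions inside $o_n$.
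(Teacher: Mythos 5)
Your proposal is correct and follows essentially the same route as the paper's proof: induction with base cases $n \in \{0,1,2\}$, the splitting $w = vu$ from Proposition~\ref{proposition:split-chains} combined with the graph-edge condition (unique obstruction occurring as a suffix of the last two nodes) in one direction, and reading the indices $(a_n, b_n)$ off that unique suffix-obstruction in the other. The only notable difference is that you carry explicitly, as a strengthened induction hypothesis, the identification of the path's terminal node with the chain's trailing block $x_{b_{n-2}+1} \cdots x_{b_{n-1}}$, a fact the paper uses implicitly when it asserts that the last node of the path for $v$ is $t$.
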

\begin{proof}
    We see easily that this is true for $n \in \ensemble{0, 1, 2}$.

    By induction, suppose this is true for a certain $n \geq 2$.
    
    Let $w \in \hat{C}_n$ defined by $w = x_1 \cdots x_\ell$ and the $n$-tuples $(\DOTS{a_1}{a_n})$ and $(\DOTS{b_1}{b_n})$ for Definition~\ref{definition:chains-top-down}. By Proposition~\ref{proposition:split-chains}, we have: $v := x_1 \cdots x_{b_{n-1}} \in \hat{C}_{n-1}$ and $u := x_{b_{n-1} + 1} \cdots x_\ell \in M$. Since $u$ is a proper suffix of $s = x_{a_n} \cdots x_{b_n}$ (an obstruction), then $u \in Q_0$. By induction hypothesis, $v \in C_n$. Moreover, the last node in the path for $v$ is $t := x_{b_{n-2} + 1}\cdots x_{b_{n-1}}$. Since $w$ is a $n$-prechain, it follows that $b_{n-2} < a_n$, thus $tu$ evidently contains $s$ as a suffix. Furthermore, the last axiom of $n$-chains (top-down) ensures that $s$ is the only obstruction that $tu$ contains. This means exactly that there is an edge between $t$ and $u$ such that the path $v \in C_n$ can be extended with $u$ and gives $w = vu \in C_{n+1}$.

    Let $w \in C_{n+1}$. It is thus defined as a path of $n+1$ length. Let $u$ be the last node of that path and $t$ the node before that. Denote by $v$ the path of length $n$ when we omit $u$. We have $v \in C_{n}$. By induction hypothesis, it follows that $v \in \hat{C}_{n-1}$. Let $(\DOTS{a_1}{a_{n-1}})$ and $(\DOTS{b_1}{b_{n-1}})$ be the $(n-1)$-tuples defining $v$ in Definition~\ref{definition:chains-top-down}. Denote by $s$ the obstruction linking $t$ and $u$. We know that $tu$ contains $s$ as a suffix and that $\ell(s) > \ell(u)$\footnote{$\ell(w')$ is the length of the word $w' \in \freeAlgebra{X}$, \ie the number of letters from $X$ that constitutes the word.} because $u$ is either a letter or a proper suffix of an obstruction. Define $a_{n} := b_{n-1} + \ell(u) - \ell(s) + 1 \leq b_{n-1}$ and $b_{n} := b_{n-1} + \ell(u) > b_{n-1}$. Then the tuples $(\DOTS{a_1}{a_{n}})$ and $(\DOTS{b_1}{b_{n}})$ make $w = vu$ into a $n$-chain (top-down) since $x_{a_{n}} \cdots x_{b_{n}} = s \in V_M$ and no other obstructions is contained in $x_{b_{n-2} + 1} \cdots x_{b_{n}}$. Therefore, $w \in \hat{C}_{n}$.
\end{proof}


\begin{example}{}{}
    Consider again the example $X = \ensemble{x, y, z}$ and $V_M = \ensemble{xxx, xxyx, yxz}$. We have:
    \[
        Q_0 = \ensemble{1, x, y, z, xx, xyx, yx, xz}.
    \]

    The graph is then given by Figure~\ref{fig:chains-graph}. Each arrow that does not start from $1$ is to be understood as indexed by an obstruction, the obstruction satisfying the condition for the directed edge.

    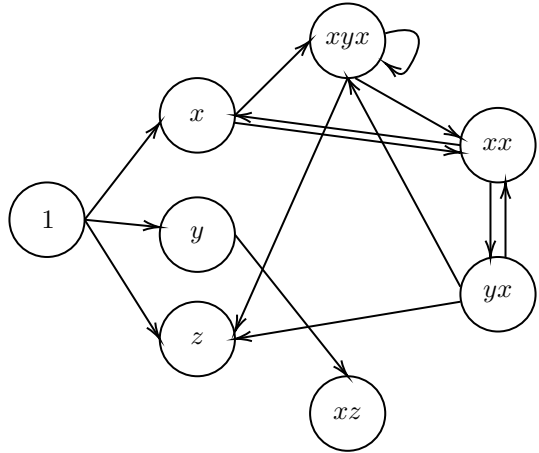
\begin{figure}[h]
        \tikzset{every picture/.style={line width=0.75pt}} 
        \centering
        \caption{$n$-chains graph for Example~\ref{example:chains}}
        \label{fig:chains-graph}
        \begin{tikzpicture}[x=0.75pt,y=0.75pt,yscale=-.75,xscale=.75]

        \draw   (250,275) .. controls (250,261.19) and (261.19,250) .. (275,250) .. controls (288.81,250) and (300,261.19) .. (300,275) .. controls (300,288.81) and (288.81,300) .. (275,300) .. controls (261.19,300) and (250,288.81) .. (250,275) -- cycle ;
        \draw   (150,75) .. controls (150,61.19) and (161.19,50) .. (175,50) .. controls (188.81,50) and (200,61.19) .. (200,75) .. controls (200,88.81) and (188.81,100) .. (175,100) .. controls (161.19,100) and (150,88.81) .. (150,75) -- cycle ;
        \draw   (350,195) .. controls (350,181.19) and (361.19,170) .. (375,170) .. controls (388.81,170) and (400,181.19) .. (400,195) .. controls (400,208.81) and (388.81,220) .. (375,220) .. controls (361.19,220) and (350,208.81) .. (350,195) -- cycle ;
        \draw   (250,25) .. controls (250,11.19) and (261.19,0) .. (275,0) .. controls (288.81,0) and (300,11.19) .. (300,25) .. controls (300,38.81) and (288.81,50) .. (275,50) .. controls (261.19,50) and (250,38.81) .. (250,25) -- cycle ;
        \draw   (150,225) .. controls (150,211.19) and (161.19,200) .. (175,200) .. controls (188.81,200) and (200,211.19) .. (200,225) .. controls (200,238.81) and (188.81,250) .. (175,250) .. controls (161.19,250) and (150,238.81) .. (150,225) -- cycle ;
        \draw   (350,95) .. controls (350,81.19) and (361.19,70) .. (375,70) .. controls (388.81,70) and (400,81.19) .. (400,95) .. controls (400,108.81) and (388.81,120) .. (375,120) .. controls (361.19,120) and (350,108.81) .. (350,95) -- cycle ;
        \draw    (350,95) -- (201.98,75.26) ;
        \draw [shift={(200,75)}, rotate = 7.59] [color={rgb, 255:red, 0; green, 0; blue, 0 }  ][line width=0.75]    (10.93,-3.29) .. controls (6.95,-1.4) and (3.31,-0.3) .. (0,0) .. controls (3.31,0.3) and (6.95,1.4) .. (10.93,3.29)   ;
        \draw    (370,120) -- (370,168) ;
        \draw [shift={(370,170)}, rotate = 270] [color={rgb, 255:red, 0; green, 0; blue, 0 }  ][line width=0.75]    (10.93,-3.29) .. controls (6.95,-1.4) and (3.31,-0.3) .. (0,0) .. controls (3.31,0.3) and (6.95,1.4) .. (10.93,3.29)   ;
        \draw    (280,50) -- (348.26,89.01) ;
        \draw [shift={(350,90)}, rotate = 209.74] [color={rgb, 255:red, 0; green, 0; blue, 0 }  ][line width=0.75]    (10.93,-3.29) .. controls (6.95,-1.4) and (3.31,-0.3) .. (0,0) .. controls (3.31,0.3) and (6.95,1.4) .. (10.93,3.29)   ;
        \draw    (275,50) -- (200.81,218.17) ;
        \draw [shift={(200,220)}, rotate = 293.81] [color={rgb, 255:red, 0; green, 0; blue, 0 }  ][line width=0.75]    (10.93,-3.29) .. controls (6.95,-1.4) and (3.31,-0.3) .. (0,0) .. controls (3.31,0.3) and (6.95,1.4) .. (10.93,3.29)   ;
        \draw    (200,80) -- (348.02,99.74) ;
        \draw [shift={(350,100)}, rotate = 187.59] [color={rgb, 255:red, 0; green, 0; blue, 0 }  ][line width=0.75]    (10.93,-3.29) .. controls (6.95,-1.4) and (3.31,-0.3) .. (0,0) .. controls (3.31,0.3) and (6.95,1.4) .. (10.93,3.29)   ;
        \draw    (200,75) -- (248.59,26.41) ;
        \draw [shift={(250,25)}, rotate = 135] [color={rgb, 255:red, 0; green, 0; blue, 0 }  ][line width=0.75]    (10.93,-3.29) .. controls (6.95,-1.4) and (3.31,-0.3) .. (0,0) .. controls (3.31,0.3) and (6.95,1.4) .. (10.93,3.29)   ;
        \draw    (300,20) .. controls (334.7,10.14) and (320,36.5) .. (314.9,44.07) .. controls (310.29,50.92) and (306.32,45.41) .. (301.54,41.25) ;
        \draw [shift={(300,40)}, rotate = 36.87] [color={rgb, 255:red, 0; green, 0; blue, 0 }  ][line width=0.75]    (10.93,-3.29) .. controls (6.95,-1.4) and (3.31,-0.3) .. (0,0) .. controls (3.31,0.3) and (6.95,1.4) .. (10.93,3.29)   ;
        \draw    (350,190) -- (275.94,51.76) ;
        \draw [shift={(275,50)}, rotate = 61.82] [color={rgb, 255:red, 0; green, 0; blue, 0 }  ][line width=0.75]    (10.93,-3.29) .. controls (6.95,-1.4) and (3.31,-0.3) .. (0,0) .. controls (3.31,0.3) and (6.95,1.4) .. (10.93,3.29)   ;
        \draw    (350,200) -- (201.97,224.67) ;
        \draw [shift={(200,225)}, rotate = 350.54] [color={rgb, 255:red, 0; green, 0; blue, 0 }  ][line width=0.75]    (10.93,-3.29) .. controls (6.95,-1.4) and (3.31,-0.3) .. (0,0) .. controls (3.31,0.3) and (6.95,1.4) .. (10.93,3.29)   ;
        \draw   (50,145) .. controls (50,131.19) and (61.19,120) .. (75,120) .. controls (88.81,120) and (100,131.19) .. (100,145) .. controls (100,158.81) and (88.81,170) .. (75,170) .. controls (61.19,170) and (50,158.81) .. (50,145) -- cycle ;
        \draw    (380,170) -- (380,122) ;
        \draw [shift={(380,120)}, rotate = 90] [color={rgb, 255:red, 0; green, 0; blue, 0 }  ][line width=0.75]    (10.93,-3.29) .. controls (6.95,-1.4) and (3.31,-0.3) .. (0,0) .. controls (3.31,0.3) and (6.95,1.4) .. (10.93,3.29)   ;
        \draw   (150,155) .. controls (150,141.19) and (161.19,130) .. (175,130) .. controls (188.81,130) and (200,141.19) .. (200,155) .. controls (200,168.81) and (188.81,180) .. (175,180) .. controls (161.19,180) and (150,168.81) .. (150,155) -- cycle ;
        \draw    (100,145) -- (148.78,81.59) ;
        \draw [shift={(150,80)}, rotate = 127.57] [color={rgb, 255:red, 0; green, 0; blue, 0 }  ][line width=0.75]    (10.93,-3.29) .. controls (6.95,-1.4) and (3.31,-0.3) .. (0,0) .. controls (3.31,0.3) and (6.95,1.4) .. (10.93,3.29)   ;
        \draw    (100,145) -- (148.01,149.8) ;
        \draw [shift={(150,150)}, rotate = 185.71] [color={rgb, 255:red, 0; green, 0; blue, 0 }  ][line width=0.75]    (10.93,-3.29) .. controls (6.95,-1.4) and (3.31,-0.3) .. (0,0) .. controls (3.31,0.3) and (6.95,1.4) .. (10.93,3.29)   ;
        \draw    (100,145) -- (148.94,223.3) ;
        \draw [shift={(150,225)}, rotate = 237.99] [color={rgb, 255:red, 0; green, 0; blue, 0 }  ][line width=0.75]    (10.93,-3.29) .. controls (6.95,-1.4) and (3.31,-0.3) .. (0,0) .. controls (3.31,0.3) and (6.95,1.4) .. (10.93,3.29)   ;
        \draw    (200,155) -- (273.76,248.43) ;
        \draw [shift={(275,250)}, rotate = 231.71] [color={rgb, 255:red, 0; green, 0; blue, 0 }  ][line width=0.75]    (10.93,-3.29) .. controls (6.95,-1.4) and (3.31,-0.3) .. (0,0) .. controls (3.31,0.3) and (6.95,1.4) .. (10.93,3.29)   ;

        \draw (375,95) node   [align=left] {\begin{minipage}[lt]{68pt}\setlength\topsep{0pt}
        \begin{center}
        $xx$
        \end{center}

        \end{minipage}};
        \draw (175,75) node   [align=left] {\begin{minipage}[lt]{68pt}\setlength\topsep{0pt}
        \begin{center}
        $x$
        \end{center}

        \end{minipage}};
        \draw (275,25) node   [align=left] {\begin{minipage}[lt]{68pt}\setlength\topsep{0pt}
        \begin{center}
        $xyx$
        \end{center}

        \end{minipage}};
        \draw (375,195) node   [align=left] {\begin{minipage}[lt]{68pt}\setlength\topsep{0pt}
        \begin{center}
        $yx$
        \end{center}

        \end{minipage}};
        \draw (275,275) node   [align=left] {\begin{minipage}[lt]{68pt}\setlength\topsep{0pt}
        \begin{center}
        $xz$
        \end{center}

        \end{minipage}};
        \draw (175,225) node   [align=left] {\begin{minipage}[lt]{68pt}\setlength\topsep{0pt}
        \begin{center}
        $z$
        \end{center}

        \end{minipage}};
        \draw (75,145) node   [align=left] {\begin{minipage}[lt]{68pt}\setlength\topsep{0pt}
        \begin{center}
        $1$
        \end{center}

        \end{minipage}};
        \draw (175,155) node   [align=left] {\begin{minipage}[lt]{68pt}\setlength\topsep{0pt}
        \begin{center}
        $y$
        \end{center}

        \end{minipage}};

        \end{tikzpicture}
    \end{figure}

\end{example}

Let us now introduce some useful notations.
\begin{definition}{}{bracket-notation}
    Let $n \in \N$, $m \in \integers{0}{n}$ and $c^{(n)} \in C_n$ be a $n$-chain according to Definition~\ref{definition:chains-bottom-up}. Let us explicitly fix $(\DOTS{a_1}{a_{n-1}})$ and $(\DOTS{b_1}{b_{n-1}})$ the uniquely determined tuples of integers defining $c^{(n)} = x_1 \cdots x_\ell$ as in Definition~\ref{definition:chains-top-down}. Write:
    \[
        \fB{c^{(n)}}^m := \begin{cases}
            1 & \text{if $m = 0$} \\
            x_1 & \text{if $m = 1$} \\
            x_1 \cdots x_{b_{m - 1}} & \text{if $1 < m \leq n$} \\
        \end{cases} \in C_m.
    \]
    This designates the $m$-chain that is a prefix of $c^{(n)}$.
    \[
        \fB{c^{(n)}}_m := \begin{cases}
            w & \text{if $m = 0$} \\
            x_2 \cdots x_\ell & \text{if $m = 1$} \\
            x_{b_{m - 1} + 1} \cdots x_\ell & \text{if $1 < m < n$} \\
            1 & \text{if $m = n$}
        \end{cases} \in \freeAlgebra{X}.
    \]
    That corresponds to the left-over part of the $n$-chain $c^{(n)}$ after removing the $m$-chain prefix.
\end{definition}

In particular, this means that for all $n \in \N$ and $c^{(n)} \in C_n$ then:
\[
    \forall m \in \integers{0}{n} \vir c^{(n)} = \fB{c^{(n)}}^m \fB{c^{(n)}}_m \in \freeAlgebra{X}.
\]

As a remark, notice the link with algebraic rewriting. Let us use some terminology from that field. The relations from $R$ define \emph{rewriting rules}:
\[
    \lambda w_1 \LM{g} w_2 + h \quad\underset{R}{\rightarrow}\quad \frac{\lambda}{\LC{g}} w_1 r(g) w_2 + h
\]
where $w_1, w_2 \in \freeAlgebra{X}$, $\lambda \in \Kbb \setminus \ensemble{0}$, $g \in R$, $h \in \freeAlgebra[\Kbb]{X}$ such that $w_1\LM{g}w_2$ does not belong to its support and $r(g) := \LC{g} \LM{g} - g$ with $\LC{g}$ the coefficient of $\LM{g}$ in $g$.

In that context, a word is said to give rise to a \emph{critical pair} if two (or the same one twice) of those rewriting rules can be applied on parts of the word that overlap, while overall going from beginning to end of the word, giving possibly different results. If three rules can be applied, we talk about \emph{critical triples}, if four, \emph{critical quadruples} and so on. In general, these are called \emph{critical branchings}.

In the common case where $R$ is a minimal non-commutative Gröbner basis (and thus $V_M =~\LM{R}$), let us now note that the set $C_n$ of $n$-chains for $n \geq 3$ is a subset of the words that give rise to a critical branching, \eg $3$-chains give rise to critical pairs. Indeed, a rewriting rule is applied on a word if it contains a leading monomial of $R$ \ie an obstruction in our case. Therefore, two rules will be simulaneoulsy applied on a $3$-chain because it contains exactly two obstructions, and so on and so forth for higher degrees.

For more details on algebraic rewriting and its connections with non-commutative Gröbner bases and Anick resolution, see \cite{Malbos-19}.

	\section{Anick resolution}

We can now introduce the Anick resolution. It will be a resolution of the field $\Kbb$ made out of free right $A$-modules. Indeed, once $A$ is augmented by $\fHV[\eps]{A}{\Kbb}$, we can equip $\Kbb$ with a structure of right $A$-module using the external law of composition $\Kbb \times A \ni (\lambda, a) \mapsto \lambda \eps(a) \in \Kbb$. Similarly, we could define a structure of left $A$-module (or even of $A$-bimodule). Hence, even if in this paper we present a resolution of $\Kbb$ by right $A$-modules, with a few minor adaptations, one by left $A$-modules would work just as well (see \cite{Nguyen-19}).

The free modules in the resolution are defined from the linear hull of (\ie the vector space generated by) the sets of chains in each degree. The differentials are defined inductively at the same time as the contracting homotopy proving that the complex is a resolution.

It is helpful to define an order on the bases of the free modules. In order to do so, we will make use of the monomial order $\prec$ at hand.
\begin{definition}{}{order-basis}
    Let $n \in \N$. Let $C_n$ be the set of $n$-chains on an anti-chain $V$, as defined in Definition~\ref{definition:chains-bottom-up}. We define an order $<$ on the basis of the free right $A$-module $\Kbb C_n \otimes_{\Kbb} A$ as:
    \[
        \forall c_1, c_2 \in C_n \vir \forall s_1, s_2 \in O(I(R)) \vir c_1 \otimes \ol{s_1} < c_2 \otimes \ol{s_2} \overset{\mathrm{def}}{\SSI} c_1s_1 \prec c_2s_2.
    \]
\end{definition}

The order $<$ is well-defined and total because $V$ is an anti-chain (since it entails that $c_1 \otimes \ol{s_1} \neq c_2 \otimes \ol{s_2}$ implies that $c_1 s_1 \neq c_2 s_2$). Moreover, it is a well-order (induced by the properties of $\prec$).


It follows that, for every element in $\Kbb C_n \otimes_{\Kbb} A$, there is a greatest term according to the order $<$, since such an element is a finite sum of terms. An alternative way to define this greatest term would be to use the monomial order on a polynomial that we associate to the element of $\Kbb C_n \otimes_{\Kbb} A$, as we do in the next definition.

\begin{definition}{}{LM}
    The \emph{leading monomial} (or \emph{high-term}, as called by Anick \cite{Anick-86}) of an element $P := \sum_{i} \lambda_i c^{(n)}_i \otimes \ol{r_i} \in \Kbb C_n \otimes_{\Kbb} A$ is defined as
    \[
        \LM{P} := \LM{\sum_{i}\lambda_i\fP{c^{(n)}_i\widehat{r_i}}} \in \freeAlgebra{X},
    \]
    where $\widehat{r_i}$ is the unique normal form of $r_i$.
\end{definition}

We can now formulate the Anick resolution and prove its exactness.

\begin{theorem}{Anick resolution}{}
    Let $\Kbb$ be a field. Let $A$ be a $\Kbb$-algebra augmented by $\eps$ with the section defined by $\eta(1_\Kbb) = 1_A$. Let $\freeAlgebra{X|R}$ be a presentation of $A$ such that $R$ is a minimal non-commutative Gröbner basis according to the monomial order $\prec$. Let $O(I(R)) := \freeAlgebra{X} \setminus \LM{I(R)}$ be the set of normal words. Let $V := \LM{R}$ be the set of leading monomials in $R$, called obstructions. For any $n \in \N$, let $C_n$ denote the set of $n$-chains on $V$ as defined in Definition~\ref{definition:chains-bottom-up}.

    The following is a free resolution of $\Kbb$ in the category of right $A$-modules:
    \[
        \cdots \rightarrow
        \Kbb C_{n+1} \otimes_{\Kbb} A \overset{d_{n+1}}{\rightarrow}
        \Kbb C_n \otimes_{\Kbb} A \rightarrow
        \cdots \rightarrow
        \Kbb C_2 \otimes_{\Kbb} A \overset{d_2}{\rightarrow}
        \Kbb C_1 \otimes_{\Kbb} A \overset{d_1}{\rightarrow}
        \Kbb C_0 \otimes_{\Kbb} A \overset{\eps}{\rightarrow}
        \Kbb \rightarrow 0,
    \]
    where for $n \geq 1$, the map of right $A$-modules $d_{n}$ satisfies:
    \begin{align*}
        \forall c^{(n)} \in C_n \vir \fP[d_{n}]{c^{(n)} \otimes 1_A} := \fB{c^{(n)}}^{n-1} \otimes \ol{\fB{c^{(n)}}_{n-1}} + \omega_{c^{(n)}},
    \end{align*}
    with either $\omega_{c^{(n)}}= 0$ or its high-term verifies $\LM{\omega_{c^{(n)}}} \prec c^{(n)}$.
\end{theorem}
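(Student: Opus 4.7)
The plan is to follow Anick's original strategy: build simultaneously, by transfinite induction on the well-order $<$ of Definition~\ref{definition:order-basis}, the $A$-linear differentials $d_n$ together with $\Kbb$-linear contracting homotopies $i_n : \Kbb C_n \otimes_{\Kbb} A \to \Kbb C_{n+1}\otimes_{\Kbb} A$ (with $i_{-1} : \Kbb \to \Kbb C_0 \otimes_{\Kbb} A$ sending $1_\Kbb$ to $1 \otimes 1_A$). The invariant to maintain throughout is the contracting homotopy identity $d_{n+1} \circ i_n + i_{n-1} \circ d_n = \mathrm{id}$, which — combined with $d_{n-1} \circ d_n = 0$ built into the construction — yields exactness of the complex in every degree. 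Since $d_n$ is $A$-linear, it suffices to specify $d_n(c^{(n)} \otimes 1_A)$ for each $n$-chain $c^{(n)}$, whereas $i_n$ must be defined on every basis element $c^{(n)} \otimes \overline{s}$ with $s \in O(I(R))$.

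For the base, set $d_0 := \varepsilon$ so that $d_0 \circ i_{-1} = \mathrm{id}_\Kbb$; the prescribed form then forces $d_1(x \otimes 1_A) = 1 \otimes \overline{x} + \omega_x$ with $\omega_x := -1 \otimes \overline{\eta(\varepsilon(\overline{x}))}$, the unique correction making $\varepsilon \circ d_1 = 0$ (vanishing when $\varepsilon|_X = 0$). For the inductive step, assume $d_m$, $i_{m-1}$ and $i_m$ are coherently built for all $m < n$ and for all basis elements of $\Kbb C_n \otimes_{\Kbb} A$ strictly smaller than $c^{(n)} \otimes 1_A$. Using Proposition~\ref{proposition:split-chains}, factor $c^{(n)} = \fB{c^{(n)}}^{n-1} \fB{c^{(n)}}_{n-1}$ and put $T_{c^{(n)}} := \fB{c^{(n)}}^{n-1} \otimes \overline{\fB{c^{(n)}}_{n-1}}$, whose leading monomial is exactly $c^{(n)}$. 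Since $d_{n-2} \circ d_{n-1} = 0$ holds by induction, the element $d_{n-1}(T_{c^{(n)}}) \in \Kbb C_{n-2} \otimes_{\Kbb} A$ lies in $\ker d_{n-2}$, and the contracting homotopy identity at level $n-1$ then gives $d_{n-1}(T_{c^{(n)}}) = d_{n-1}(i_{n-2}(d_{n-1}(T_{c^{(n)}})))$. Setting $\omega_{c^{(n)}} := -i_{n-2}(d_{n-1}(T_{c^{(n)}}))$ and $d_n(c^{(n)} \otimes 1_A) := T_{c^{(n)}} + \omega_{c^{(n)}}$ thus forces $d_{n-1} \circ d_n(c^{(n)} \otimes 1_A) = 0$. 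The extension of $i_n$ to each basis element $c^{(n)} \otimes \overline{s}$ is then dictated by the contracting homotopy identity: using the graph of Definition~\ref{definition:chains-bottom-up}, test whether $c^{(n)} s$ admits an $(n+1)$-chain as a prefix; if not, put $i_n(c^{(n)} \otimes \overline{s}) := 0$; otherwise write $i_n(c^{(n)} \otimes \overline{s}) := c^{(n+1)} \otimes \overline{t} - i_n(\text{recursive correction})$, where $c^{(n+1)}$ is the resulting chain, $t$ the trailing normal suffix, and the correction a call at strictly smaller basis elements.

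The main obstacle is the leading-monomial bookkeeping that makes this well-founded recursion actually terminate and delivers the prescribed bound $\LM{\omega_{c^{(n)}}} \prec c^{(n)}$. Concretely, one must verify that $\LM{d_{n-1}(T_{c^{(n)}})}$ — computed according to Definition~\ref{definition:LM} — is strictly smaller than $c^{(n)}$, so that $i_{n-2}$ is invoked only at values lying strictly below $c^{(n)}$ in the well-order $<$, and similarly for each recursive call defining $i_n$. This is precisely where the chain structure is essential: the canonical factorisation of Proposition~\ref{proposition:split-chains}, together with the uniqueness of normal forms from Proposition~\ref{proposition:M-oir} and the compatibility of $\prec$ with left and right multiplication, guarantees that after applying the inductive formula for $d_{n-1}$ to $T_{c^{(n)}}$ and reducing each term to normal form, the resulting leading monomial drops strictly below $c^{(n)}$. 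Without this combinatorial input — reflecting the rigidity of how the constituent obstructions sit inside a chain — the recursion would fail to terminate and the prescribed leading monomial bound would not hold.
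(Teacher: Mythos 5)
Your architecture is the paper's one — simultaneous induction building $d_n$ together with a $\Kbb$-linear homotopy, the same formula $d_n(c^{(n)}\otimes 1_A) = \fB{c^{(n)}}^{n-1}\otimes\ol{\fB{c^{(n)}}_{n-1}} - i_{n-2}\bigl(d_{n-1}(\fB{c^{(n)}}^{n-1}\otimes\ol{\fB{c^{(n)}}_{n-1}})\bigr)$, and a recursion on leading monomials for the homotopy — but you strengthen the inductive commitment in a way you never pay for. The paper defines $i_n$ only on $\ker(d_n)$ and needs only $d_{n+1}i_n = \mathrm{id}_{\ker(d_n)}$ together with $d_n d_{n+1}=0$; this is enough for exactness, and, crucially, when constructing $i_n(v)$ one gets to use $d_n(v)=0$: the leading term of $d_n(v)$ must cancel against strictly smaller terms, which forces $\fB{c^{(n)}_0}_{n-1}s_0$ to be non-normal, and the leftmost obstruction it contains overlaps the last obstruction of $c^{(n)}_0$, which is what produces the $(n+1)$-chain feeding the recursion. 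You instead define $i_n$ on \emph{every} basis element $c^{(n)}\otimes\ol{s}$ and carry the full identity $d_{n+1}i_n + i_{n-1}d_n = \mathrm{id}$ as your only invariant. In your branch where $c^{(n)}s$ has no $(n+1)$-chain prefix you set $i_n(c^{(n)}\otimes\ol{s}):=0$, and then your invariant becomes the nontrivial claim $i_{n-1}\bigl(d_n(c^{(n)}\otimes\ol{s})\bigr) = c^{(n)}\otimes\ol{s}$, which you never verify; it requires its own induction tracing how the recursion defining $i_{n-1}$ unwinds on $d_n(c^{(n)}\otimes\ol{s})$, and nothing in your sketch supplies it. It is precisely to avoid this obligation that the paper restricts the homotopy to kernels; either do the same (and then the exactness criterion ``$d_nd_{n+1}=0$ plus a section of $d_{n+1}$ over $\ker d_n$'' suffices), or add the missing verification.

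The second gap is the leading-monomial bookkeeping you flag but do not close. To get $\LM{\omega_{c^{(n)}}}\prec c^{(n)}$ from $\omega_{c^{(n)}} = -i_{n-2}\bigl(d_{n-1}(T_{c^{(n)}})\bigr)$ you need two things: that $\LM{d_{n-1}(T_{c^{(n)}})}\prec c^{(n)}$, and that the homotopy does not raise leading monomials. The latter is an inductive property in its own right — the paper carries $\LM{i_{m-1}(v)} = \LM{v}$ explicitly through the induction — and it is absent from your invariant list; ``$i_{n-2}$ is invoked only at values lying strictly below $c^{(n)}$'' bounds the \emph{input} of $i_{n-2}$, not its output. (The same property is what makes your recursive calls for $i_n$ well-founded and leading-term-cancelling.) As for the former, ``the rigidity of how the constituent obstructions sit inside a chain'' is an appeal, not an argument: the concrete reason is that the high term of $d_{n-1}(T_{c^{(n)}})$ is $\fB{c^{(n)}}^{n-2}\otimes\ol{\fB{c^{(n)}}_{n-2}}$, and $\fB{c^{(n)}}_{n-2}$ contains the last obstruction of $c^{(n)}$ in full (obstructions two apart in a chain do not overlap), hence is not normal, so passing to its normal form strictly decreases the monomial while the corresponding word in $\freeAlgebra{X}$ equals $c^{(n)}$ before reduction. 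Without these two points the prescribed bound $\LM{\omega_{c^{(n)}}}\prec c^{(n)}$ — which is part of the statement — is not established.
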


\begin{proof}
    The proof is done by induction by constructing the differentials and contracting homotopy at the same time.

    Note that to prove exacteness at $E$ in $\cdots \rightarrow F \overset{\delta_1}{\rightarrow} E \overset{\delta_0}{\rightarrow} \cdots$, it suffices to prove that $\delta_0 \delta_1 = 0$ and that there exists a $\Kbb$-linear map $\fHV[\iota_0]{\ker(\delta_0)}{F}$ such that $\delta_1 \iota_0 = \id_{\ker( \delta_0)}$.
    
    Since $C_0 = \ensemble{1}$, we can identify $\Kbb C_0 \otimes_{\Kbb} A$ with $A$ for the initialisation, as a matter of simplifying notations. 
    
    Then, define $\fHV[d_1]{\Kbb C_1 \otimes_{\Kbb} A}{A}$ as the map of right $A$-modules with:
    \[
        \forall x \in C_1 = X \vir d_1(x \otimes 1_A) = \ol{x} - \eta\eps(\ol{x}).
    \]
    
    Firstly, it is evident that $\eps d_1 = 0$ since $\eps$ is $\Kbb$-linear map and $\eta$ is a section of it. The kernel of $\eps$ is spanned by the elements from $A$ of the form $\ol{s} - \eta\eps(\ol{s})$ where $s \in O(I(R))$. Indeed, every element of $a \in A$ is written $\eta\eps(a) + (a - \eta\eps(a))$ and we have the decomposition from the augmentation: $A = \Kbb 1_A \oplus \ker(\eps)$. Defining $i_0$ on those elements:
    \begin{align*}
        \forall s = x_1 \cdots x_\ell \in O(I(R)) \vir
            i_0(\ol{s} - \eta \eps(\ol{s})) &:= \sum_{j = 1}^{\ell} \fP[\eps]{\ol{x_{1} \cdots x_{j-1}}} \fP{x_j \otimes \ol{x_{j+1} \cdots x_{\ell}}}, \\
        \text{(with the convention that $x_1 \cdots x_{j-1} = 1$} & \text{ if $j \leq 1$ and $x_{j+1} \cdots x_\ell = 1$ if $j \geq \ell$)}
    \end{align*}
    and extending by $\Kbb$-linearity on $\ker(\eps)$ gives us a map satisfying $d_1 i_0 = \id_{\ker(\eps)}$. Indeed for $s = x_1 \cdots x_\ell \in O(I(R))$, we have:
    \begin{align*}
        d_1 i_0(\ol{s} - \eta \eps (\ol{s}))
            &= \fP[d_1]{\sum_{j = 1}^{\ell} \fP[\eps]{\ol{x_{1} \cdots x_{j-1}}} \fP{x_j \otimes \ol{x_{j+1} \cdots x_{\ell}}}} \\
            &= \sum_{j = 1}^{\ell} \fP[\eps]{\ol{x_{1} \cdots x_{j-1}}} \fP[d_1]{x_j \otimes \ol{x_{j+1} \cdots x_{\ell}}} &&\text{$d_1$ $\Kbb$-linear,}\\
            &= \sum_{j = 1}^{\ell} \fP[\eps]{\ol{x_{1} \cdots x_{j-1}}} \fP{\ol{x_j \cdots x_\ell} - \eta \eps(\ol{x_j}) \ol{x_{j+1} \cdots x_\ell}} && \text{definition of $d_1$,} \\
            &= \sum_{j = 1}^{\ell} \fP[\eps]{\ol{x_{1} \cdots x_{j-1}}} \ol{x_j \cdots x_\ell} - \eta \eps(\ol{x_{1} \cdots x_j}) \ol{x_{j+1} \cdots x_{\ell}} && \substack{\text{$\eta$ $\Kbb$-linear,} \\ \text{$\eps$ algebra morphism.}}
    \end{align*}

    Since $\eta(1_{\Kbb}) = 1_A$\footnote{This is a requirement, trivially verified when $\eta$ is considered as a morphism of unitary algebras and not simply $\Kbb$-linear. Otherwise, in that latter case, $\eta$, being a section of $\eps$, could satisfy $\eta(1_{\Kbb}) = 1_A + \omega$, where $\omega \in \ker(\eps)$}, then $\eta \eps(\ol{x_{1} \cdots x_j}) \ol{x_{j+1} \cdots x_{\ell}} = \eps(\ol{x_{1} \cdots x_j}) \ol{x_{j+1} \cdots x_{\ell}}$, therefore the right-most and left-most of two consecutive terms in the sum cancel out. Remain only the left- and right-most terms of the entire sum \ie:
    \[
        d_1 i_0(\ol{s} - \eta \eps (\ol{s})) = \ol{x_1 \cdots x_\ell} - \eta(\eps(\ol{x_1 \cdots x_\ell})) = \ol{s} - \eta \eps(\ol{s}).
    \]

    This proves exacteness of the sequence at $\Kbb C_0 \otimes_{\Kbb} A$.

    Now, suppose that, for $n \in \N$, the sequence:
    \[
        \cdots \rightarrow
        \Kbb C_{n+1} \otimes_{\Kbb} A \overunderset{d_{n+1}}{i_n}{\rightleftarrows}
        \Kbb C_{n} \otimes_{\Kbb} A \overunderset{d_{n}}{i_{n-1}}{\rightleftarrows}
        \Kbb C_{n-1} \otimes_{\Kbb} A \rightleftarrows
        \cdots \rightleftarrows
        \Kbb C_1 \otimes_{\Kbb} A \overunderset{d_1}{i_0}{\rightleftarrows}
        \Kbb C_0 \otimes_{\Kbb} A \overunderset{\eps}{\eta}{\rightleftarrows}
        \Kbb \rightarrow 0
    \]
    has been proven exact up to $\Kbb C_{n-1} \otimes_{\Kbb} A$ by defining the differentials $d_1$, ..., $d_{n}$ and the contracting homotopy maps $i_0$, ..., $i_{n-1}$ that verify for all $m \in \integers{1}{n}$: (letting $d_0 := \eps$)
    \begin{enum_roman}
        \item $d_{m-1} d_{m} = 0$. \label{proof:item:complex}
        \item $\forall c^{(m)} \in C_m \vir \fP[d_{m}]{c^{(m)}} = \fB{c^{(m)}}^{m-1} \otimes \ol{\fB{c^{(m)}}_{m-1}} + \omega_{c^{(m)}}$ \newline where either $\omega_{c^{(m)}} = 0$ or $\LM{\omega_{c^{(m)}}} \prec c^{(m)}$. \label{proof:item:differentials}
        \item $d_m i_{m-1} = \id_{\ker(d_{m-1})}$. \label{proof:item:acyclic}
        \item $\forall v \in \ker(d_{m - 1}) \setminus \ensemble{0} \vir \LM{i_{m-1}(v)} = \LM{v}\quad$ (equality in $\freeAlgebra{X}$ as in Definition~\ref{definition:LM}). \label{proof:item:contracting-homotopy}
    \end{enum_roman}

    We have proven Properties~\ref{proof:item:complex}, \ref{proof:item:differentials}, and \ref{proof:item:acyclic} are satisfied at initialisation. Moreover, it is quite evident from the definition of $i_0$ that Property~\ref{proof:item:contracting-homotopy} is verified.

    Let us define $d_{n+1}$ and $i_n$ such that they prove exactness at $\Kbb C_n \otimes_{\Kbb} A$.

    Define:
    \[
        \fHV[d_{n+1}]{\Kbb C_{n+1} \otimes_{\Kbb} A}{\Kbb C_n \otimes_{\Kbb} A}
        [c^{(n+1)} \otimes 1_A]
        [c^{(n)} \otimes \ol{t} - \fP[i_{n-1} d_n]{c^{(n)} \otimes \ol{t}}]
    \]
    where $c^{(n)} := \fB{c^{(n+1)}}^n$ and $t := \fB{c^{(n+1)}}_n$.

    By Property~\ref{proof:item:acyclic} for $m = n$, it is evident that $d_n d_{n+1} = 0$, proving Property~\ref{proof:item:complex} for $d_{n+1}$.

    Letting $\omega_{c^{(n+1)}} := -\fP[i_{n-1} d_n]{c^{(n)} \otimes \ol{t}}$ we obtain the desired expression for $d_{n+1}$. Indeed, by Property~\ref{proof:item:contracting-homotopy} for $m = n$, $\LM{\omega_{c^{(n+1)}}} = \LM{d_n(c^{(n)} \otimes \ol{t})}$. But by Property~\ref{proof:item:differentials}, $\LM{d_n(c^{(n)} \otimes \ol{t})}$ matches with $c^{(n-1)} \otimes \ol{r}$ where $c^{(n-1)} := \fB{c^{(n+1)}}^{n-1}$ and $r := \fB{c^{(n+1)}}_{n-1}$. But since there are no overlaps between the last obstructions of $c^{(n+1)}$ and of $c^{(n-1)}$, $r$ contains the last obstruction of $c^{(n+1)}$ and is therefore not normal. Reducing it to its normal form $\hat{r}$ implies that $c^{(n-1)}\hat{r}$ is smaller than $c^{(n+1)}$. Thus $\LM{\omega_{c^{(n+1)}}} \prec c^{(n+1)}$, proving Property~\ref{proof:item:differentials} for $d_{n+1}$.

    Let us define recursively the $\Kbb$-linear map $i_n$ on $\ker(d_{n})$. Let $v = \sum_{i} \lambda_i c^{(n)}_i \otimes \ol{s_i} \in \ker(d_n)$. Assume that $i_n$ has been defined for all $v' \in \ker(d_n)$ with $\LM{v'} \prec \LM{v}$ such that it satisfies Properties~\ref{proof:item:acyclic}~and~\ref{proof:item:contracting-homotopy} on those elements. Without loss of generality, assume that $\LM{v}$ coincides with $c^{(n)}_0 \otimes \ol{s_0}$ where $s_0$ is normal. Then, since $d_n(v) = 0$, it follows that $c^{(n-1)}_0 \otimes \ol{r_0} = - \omega_{v}$ where $\fP[d_n]{c^{(n)}_0 \otimes \ol{s_0}} = c^{(n-1)}_0 \otimes \ol{r_0} + \omega_{c^{(n)}_0}$ in such a way that $c^{(n-1)}_0 = \fB{c^{(n)}_0}^{n-1}$ and $r_0 = \fB{c^{(n)}_0}_{n-1} s_0$, as well as, $\omega_v = \frac{1}{\lambda_0} \fP{\omega_{c^{(n)}_0} + \fP[d_n]{\sum_{i \neq 0} \lambda_i c^{(n)}_i \otimes \ol{s_i}}}$ and thus by Property~\ref{proof:item:differentials}, $\LM{c^{(n-1)}_0 \otimes \ol{r_0}} \prec c^{(n)}_0 s_0$. This implies, since $c^{(n-1)}_0 r_0 = c^{(n)}_0 s_0$, that $r_0$ is not normal and thus contains an obstruction. Consider the obstruction in $r_0$ starting the furthest to the left. It will overlap with the last obstruction in $c^{(n)}_0$ since $s_0$ is normal. Therefore, we obtain an $(n+1)$-chain $c^{(n+1)}$ and $t$ a proper suffix of $s_0$ such that $c^{(n-1)}_0 r_0 = c^{(n)}_0 s_0 = c^{(n+1)}t$ in $\freeAlgebra{X}$. Define:
    \begin{equation}
        i_n(v) := \LC{v} c^{(n+1)} \otimes \ol{t} + \fP[i_n]{v - \LC{v} \fP[d_{n+1}]{c^{(n+1)} \otimes \ol{t}}}. \label{eq:contracting-homotopy}
    \end{equation}
    This works because $\LM{v - \LC{v} \fP[d_{n+1}]{c^{(n+1)} \otimes \ol{t}}} < \LM{v}$ since $d_{n+1}$ verifies the Property~\ref{proof:item:differentials} and thus cancellation on the leading term occurs. It follows that $i_n$ verifies Property~\ref{proof:item:contracting-homotopy} since we assumed $i_n$ satisfies Property~\ref{proof:item:contracting-homotopy} for elements with a smaller leading monomial.

    Finally, by recursive hypothesis, we have that Property~\ref{proof:item:acyclic} is verified on elements with a smaller leading monomial. Hence:
    \[
        d_{n+1} i_n (v) = \LC{v} \fP[d_{n+1}]{c^{(n+1)} \otimes \ol{t}} + v - \LC{v} \fP[d_{n+1}]{c^{(n+1)} \otimes \ol{t}} = v
    \]
    and thus, $d_{n+1}$ and $i_n$ will ultimately verify Property~\ref{proof:item:acyclic} on $\ker(d_n)$.
    
    This concludes the inductive proof.
\end{proof}

\begin{example}{}{}
    Let us consider, in the continuity of the examples throughout this paper, the algebra presented by $\freeAlgebra{X|R}$, where $X = \ensemble{x, y, z}$ and $R = \ensemble{xxyx, xxx - xx, yxz - yx}$ with the deglex monomial order induced by $x \succ y \succ z$ augmented with the evalutation of polynomials at zero. We have $V := \LM{R} = \ensemble{xxyx, xxx, yxz}$ and the graph of $n$-chains is given in Figure~\ref{fig:chains-graph}.

    We have, for all $\zeta \in X$ and $x_1 \cdots x_\ell \in O(I(R))$:
    \begin{align*}
        d_1(\zeta \otimes \ol{1}) &= \ol{\zeta} \\
        i_0(\ol{x_1 \cdots x_\ell}) &= x_1 \otimes \ol{x_2 \cdots x_\ell}
    \end{align*}

    Then:
    \begin{align*}
        d_2(xxx \otimes \ol{1})
            &= x \otimes \ol{xx} - i_0 d_1 (x \otimes \ol{xx})
                && \text{definition of $d_2$} \\
            &= x \otimes \ol{xx} - i_0 (\ol{xxx})
                && \text{definition of $d_1$} \\
            &= x \otimes \ol{xx} - i_0 (\ol{xx})
                && \text{reduction} \\
            &= x \otimes \ol{xx} - x \otimes \ol{x}
                && \text{definition of $i_0$}
    \end{align*}

    Similarly, we compute:
    \begin{align*}
        d_2(xxyx \otimes \ol{1})
            &= x \otimes \ol{xyx} \\
        d_2(yxz \otimes \ol{1})
            &= y \otimes \ol{xz} - y \otimes \ol{x}
    \end{align*}

    The $3$-chains are $\ensemble{xxyxxyx, xxyxxx, xxyxz, xxxyx, xxxx}$. Then:
    \begin{align*}
        d_3(xxxyx \otimes \ol{1})
            &= xxx \otimes \ol{yx} - i_1 d_2 (xxx \otimes \ol{yx})
                && \text{definition of $d_3$} \\
            &= xxx \otimes \ol{yx} - i_1 (x \otimes \ol{xxyx} - x \otimes \ol{xyx})
                && \text{definition of $d_2$} \\
            &= xxx \otimes \ol{yx} - i_1 (x \otimes 0 - x \otimes \ol{xyx})
                && \text{reduction} \\
            &= xxx \otimes \ol{yx} + xxyx \otimes \ol{1}
                && \text{definition of $i_1$ (see (\ref{eq:contracting-homotopy}))}
    \end{align*}
    In an analoguous manner, we compute:
    \begin{align*}
        d_3(xxyxxyx \otimes \ol{1}) &= xxyx \otimes \ol{xyx} \\
        d_3(xxyxxx \otimes \ol{1}) &= xxyx \otimes \ol{xx} - xxyx \otimes \ol{x} \\
        d_3(xxyxz \otimes \ol{1}) &= xxyz \otimes \ol{z} - xxyx \otimes \ol{1} \\
        d_3(xxxx \otimes \ol{1}) &= xxx \otimes \ol{x}
    \end{align*}

    The $4$-chains are:
    \begin{multline*}
        \{xxyxxyxxyx, xxyxxyxxx, xxyxxyxz, xxyxxxyx, \\
        xxyxxxx, xxxyxxyx, xxxyxxx, xxxyxz, xxxxxyx, xxxxxx\}
    \end{multline*}

    We thus have:
    \begin{align*}
        d_4 (xxxyxxx \otimes \ol{1})
            &= xxxyx \otimes \ol{xx} - i_2 d_3 (xxxyx \otimes \ol{xx})
                && \text{definition of $d_4$} \\
            &= xxxyx \otimes \ol{xx} - i_2 (xxx \otimes \ol{yxxx} + xxyx \otimes \ol{xx})
                && \text{definition of $d_3$} \\
            &= xxxyx \otimes \ol{xx} - i_2 (xxx \otimes \ol{yxx} + xxxyx \otimes \ol{xx})
                && \text{reduction} \\
            &= xxxyx \otimes \ol{xx} - xxxyx \otimes \ol{x} - i_2 (xxyx \otimes \ol{xx} - xxyx \otimes \ol{x})
                && \text{definition of $i_2$ (see (\ref{eq:contracting-homotopy}))} \\
            &= xxxyx \otimes \ol{xx} - xxxyx \otimes \ol{x} - xxyxxx \otimes \ol{1}
                && \text{definition of $i_2$ (see (\ref{eq:contracting-homotopy}))}
    \end{align*}
    We compute in the same way:
    \begin{align*}
        d_4 (xxyxxyxxyx \otimes \ol{1})
            &= xxyxxyx \otimes \ol{xyx} \\
        d_4 (xxyxxyxxx \otimes \ol{1})
            &= xxyxxyx \otimes \ol{xx} - xxyxxyx \otimes \ol{x} \\
        d_4 (xxyxxyxz \otimes \ol{1})
            &= xxyxxyx \otimes \ol{z} - xxyxxyx \otimes \ol{1} \\
        d_4 (xxyxxxyx \otimes \ol{1})
            &= xxyxxx \otimes \ol{yx} + xxyxxyx \otimes \ol{1} \\
        d_4 (xxyxxxx \otimes \ol{1})
            &= xxyxxx \otimes \ol{x} \\
        d_4 (xxxyxxyx \otimes \ol{1})
            &= xxxyx \otimes \ol{xyx} - xxyxxyx \otimes \ol{1} \\
        d_4 (xxxyxz \otimes \ol{1}) 
            &= xxxyx \otimes \ol{z} - xxxyx \otimes \ol{1} - xxyxz \otimes \ol{1} \\
        d_4 (xxxxxyx \otimes \ol{1})
            &= xxxx \otimes \ol{xyx} \\
        d_4 (xxxxxx \otimes \ol{1})
            &= xxxx \otimes \ol{xx} - xxxx \otimes \ol{x}
    \end{align*}

    We can compute in that fashion any differential, by computing all the previous ones that are needed.
\end{example}

	\section*{Acknowledgements}

	I would like to thank Cyrille \textsc{Chenavier} for his continuous guidance and support all along the process of writing this paper, providing many relevant remarks and insight on the subject. I would also like to thank Thomas \textsc{Cluzeau} for proofreading this note.

	\bibliography{references}{}

\begin{thebibliography}{1}

\bibitem{Anick-85}
David~J. Anick.
\newblock On monomial algebras of finite global dimension.
\newblock {\em Transactions of the American Mathematical Society}, 291:291--310, 1985.

\bibitem{Anick-86}
David~J. Anick.
\newblock On the homology of associative algebras.
\newblock {\em Trans. Amer. Math. Soc.}, 296(2):641--659, 1986.

\bibitem{Brevsar-14}
Matej Bre\v{s}ar.
\newblock {\em Introduction to noncommutative algebra}.
\newblock Universitext. Springer, Cham, 2014.

\bibitem{Cojocaru-97}
Svetlana Cojocaru and Victor Ufnarovski.
\newblock B{ERGMAN} under {MS}-{DOS} and {A}nick's resolution.
\newblock volume~1, pages 139--147. 1997.
\newblock Lie computations (Marseille, 1994).

\bibitem{Draxler-99}
P.~Dr\"{a}xler, G.~O. Michler, and C.~M. Ringel, editors.
\newblock {\em Computational methods for representations of groups and algebras}, volume 173 of {\em Progress in Mathematics}.
\newblock Birkh\"{a}user Verlag, Basel, 1999.
\newblock Papers from the 1st Euroconference held at the University of Essen, Essen, April 1--5, 1997.

\bibitem{MacLane-95}
Saunders Mac~Lane.
\newblock {\em Homology}.
\newblock Class. Math. Berlin: Springer-Verlag, reprint of the 3rd corr. print. 1975 edition, 1995.

\bibitem{Malbos-19}
Philippe Malbos.
\newblock {Noncommutative linear rewriting: applications and generalizations}.
\newblock In {\em {Two algebraic byways from differential equations: Gr{\"o}bner bases and quivers}}, volume~28 of {\em Algorithms and Computations in Mathematics}. {Springer}, 2019.
\newblock Lecture note of the Kobe-Lyon summer school 2015.

\bibitem{Nguyen-19}
Van~C. Nguyen, Xingting Wang, and Sarah Witherspoon.
\newblock Finite generation of some cohomology rings via twisted tensor product and {A}nick resolutions.
\newblock {\em J. Pure Appl. Algebra}, 223(1):316--339, 2019.

\bibitem{Ufnarovski-95}
Victor Ufnarovski.
\newblock Combinatorial and asymptotic methods in algebra.
\newblock In {\em Algebra, {VI}}, volume~57 of {\em Encyclopaedia Math. Sci.}, pages 1--196. Springer, Berlin, 1995.

\end{thebibliography}
    \bibliographystyle{plain}
\end{document}